\DeclareRobustCommand{\citet}[2][]{%
    \ifthenelse{\equal{#1}{}}%
        {\cite{#2}}%
        {\cite[#1]{#2}}%
}
\DeclareRobustCommand{\citep}[2][]{%
    \ifthenelse{\equal{#1}{}}%
        {\cite{#2}}%
        {\cite[#1]{#2}}%
}
\begin{document}

\makeatletter
\title{{\emph{A Priori}} Error Bounds for the Approximate Deconvolution Leray Reduced Order Model}

\author[1]{Ian Moore}
\author[2]{Anna Sanfilippo}
\author[3]{Francesco Ballarin}
\author[1]{Traian Iliescu}
\affil[1]{Department of Mathematics, Virginia Tech, 225 Stanger Street, Blacksburg, Virginia 24061, US}
\affil[2]{Department of Mathematics, Universit{\`a} di Trento, via Sommarive 14, Povo, 38123, Italy}
\affil[3]{Department of Mathematics and Physics, Universit{\`a} Cattolica del Sacro Cuore, via Garzetta 48, Brescia, 25133, Italy}

\date{}

\makeatother

\def\abstractcontent{
The approximate deconvolution Leray reduced order model (ADL-ROM) uses spatial filtering to increase the ROM stability, and approximate deconvolution to increase the ROM accuracy.
In the under-resolved numerical simulation of convection-dominated flows, ADL-ROM was shown to be significantly more stable than the standard ROM, and more accurate than the Leray ROM.
In this paper, we prove 
{\emph{a priori}} error bounds for the approximate deconvolution operator and 
ADL-ROM.
To our knowledge, these are the first numerical analysis results for approximate deconvolution in a ROM context.
We illustrate these numerical analysis results in the numerical simulation of convection-dominated flows.
}

\def\keywordscontent{Reduced order model; POD-Galerkin; convection-dominated flows; spatial filter; approximate deconvolution; Leray model.}

\makeatletter
\maketitle
\begin{abstract}
\abstractcontent
\end{abstract}

\makeatother

\section{Introduction}

Reduced order models (ROMs) are surrogate models that can reduce the dimension of full order models (FOMs) (i.e., classical discretizations with, e.g., finite element method) by orders of magnitude.
Galerkin ROMs (G-ROMs) are ROMs that employ a data-driven basis in a Galerkin framework:
First, data is used to construct a basis $\{ \bvarphi_1, \ldots, \bvarphi_R \}$ that is optimal with respect to the training data (i.e., problem and parameters).
Then, the first $r$ basis functions (which represent the dominant structures in the training data) are used in a Galerkin formulation of the given PDE.
This yields an $r$-dimensional ODE system, where $r$ can be orders of magnitude lower than the FOM dimension.

G-ROMs have been successful in the numerical simulation of diffusion-dominated (laminar) flows, at low Reynolds numbers.
However, the standard G-ROMs generally fail in the numerical simulation of convection-dominated (e.g., turbulent) flows, at high Reynolds numbers.
We emphasize that this is a major drawback of standard G-ROMs since the vast majority of flows in engineering (e.g., aerospace and automotive industry) and 
geophysics (e.g., 
weather prediction and climate modeling) 
are convection-dominated.

The main reason for the poor G-ROM performance is that  convection-dominated (e.g., turbulent) flows exhibit a wide range of spatial scales.
To approximate all the spatial scales requires a large number of basis functions, which yields a 
high-dimensional G-ROM.
Since G-ROM needs to be computationally efficient (and, thus, relatively low-dimensional), practitioners  truncate the ROM basis.
We emphasize, however, that this severe truncation yields inaccurate G-ROM results, usually in the form of numerical oscillations.
The reason is that, although the truncated ROM basis functions $\{ \bvarphi_{r+1}, \ldots, \bvarphi_R \}$ represent the small spatial scales, these scales play an important role in the flow physics.
Indeed, as shown in~\citet{CSB03}, the role of the higher index ROM basis function is to dissipate energy from the ROM system, just as in the classical Fourier setting~\citep{Pop00}.
Since this physical mechanism is not included in the standard G-ROMs, the energy that should be dissipated is incorrectly represented by the relatively low-dimensional basis $\{ \bvarphi_1, \ldots, \bvarphi_r \}$ and generates unphysical oscillations.

There are essentially two strategies to tackle these unphysical oscillations yielded by under-resolved G-ROMs in convection-dominated flows: (i) ROM closures, and (ii) ROM stabilizations.
ROM closures, which are reviewed in~\citet{ahmed2021closures}, add an extra term (a closure model) to the standard G-ROM in order to represent the effect of the discarded modes $\{ \bvarphi_{r+1}, \ldots, \bvarphi_R \}$.
ROM stabilizations (see~\citet{parish2023residual} for a review) either modify the standard G-ROM or add extra terms to it with the specific goal of increasing the G-ROM's numerical stability.
Regularized ROMs (Reg-ROMs) are stabilizations strategies that use spatial filtering to increase the G-ROM numerical stability.
We note that the ROM closures and stabilizations often share similar features since they both aim at increasing the G-ROMs stability (albeit using different strategies).

Numerical stabilizations and closures are nowadays accompanied with extensive theoretical support and numerical analysis when employed with classical numerical discretizations, such as the finite element method.
For instance, the mathematical analysis of several classes of large eddy simulation (LES) models is discussed in the monographs~\citet{BIL05,rebollo2014mathematical,john2004large}, alongside the numerical analysis of their discretization. Furthermore, the mathematical and numerical analysis of stabilization strategies is presented in the monograph~\citet{roos2008robust}.
The aforementioned references tackle on the one hand fundamental numerical analysis questions (such as stability and convergence of the discrete methods), and on the other hand practical challenges (in particular, parameter scalings for stabilization coefficients), which are critical in the application of stabilization techniques in real applications. 

In contrast, despite the recent interest in ROM closures~\citep{ahmed2021closures} and stabilizations~\citep{parish2023residual}, providing mathematical and numerical analysis support for them is generally an open problem.
Indeed, the ROM closures and stabilizations are generally assessed heuristically:
The proposed model is used in numerical simulations and is shown to improve the numerical accuracy of the standard G-ROM and/or other ROM closure models.
However, fundamental questions in ROM numerical analysis (e.g., stability, convergence, and rate of convergence) are still wide open for most of the ROM closures and stabilizations.
Only the first steps in the numerical  analysis  of ROM closures~\citep[Section 4.6]{ballarin2024apriori} and stabilizations~\citep[Section 4.5]{ballarin2024apriori} have been taken.
Despite these first steps, the numerical analysis of ROM closures and stabilizations is nowhere close to the numerical analysis for FOM closures and stabilizations.
In this paper, we 
take a step in this direction and prove {\emph{a priori}} error bounds for the van Cittert approximate deconvolution operator and 
a recent Reg-ROM, the approximate deconvolution Leray ROM (ADL-ROM), which was proposed in~\citet{sanfilippo2023approximate}.
To our knowledge, these are the first numerical analysis results for approximate deconvolution at a ROM level.

The rest of the paper is organized as follows:
In Section~\ref{sec:fe-preliminaries}, we present notation and preliminaries that will be used throughout the paper.
In Section~\ref{sec:rom-preliminaries}, we include several numerical analysis results for the G-ROM, spatial filter, and approximate deconvolution.
These results are leveraged
in Section~\ref{sec:adl-rom-numerical-analysis} 
to prove 
{\emph{a priori}} error bounds for the ADL-ROM. 
In Section~\ref{sec:numerical-results}, we present numerical results that illustrate the 
{\emph{a priori}} error bounds proved in Sections~\ref{sec:rom-preliminaries} and~\ref{sec:adl-rom-numerical-analysis}.
Finally, in Section~\ref{sec:conclusions}, we draw conclusions and outline future research directions.

\section{Finite Element Preliminaries}
    \label{sec:fe-preliminaries}

In this section, we present notation and standard numerical analysis results for the finite element (FE) discretization of the incompressible Navier-Stokes equations (NSE):
\begin{eqnarray}
\frac{\partial \bu}{\partial t} - \nu \Delta\bu +\bigl(\bu\cdot\nabla\bigr)\bu+\nabla p \,&=\, \bff, & \text{in } \Omega \times (0, T]\label{eqn:nse-1} \\
\nabla\cdot\bu\,&=\,0, & \text{in } \Omega \times (0, T] \label{eqn:nse-2}.
\end{eqnarray}
Here $\Omega \subset \mathbb{R}^d$ is the spatial domain, $d = 2, 3$ is the spatial dimension, $T$ is the final time, $\bu = [u_1, \hdots, u_d]^\top: \Omega \times [0, T] \to \mathbb{R}^d$ is the NSE velocity, and $p: \Omega \times [0, T] \to \mathbb{R}$ is the NSE pressure. Furthermore, $Re$ is the dimensionless Reynolds number, $\nu$ is its inverse, while $\bff: \Omega \times [0, T] \to \mathbb{R}^d$ is a forcing term.
For simplicity, 
we equip the NSE with homogeneous Dirchlet boundary conditions.

In the following, $\| \cdot \|$ and $(\cdot, \cdot)$ refer to the $L^2(\Omega)$ norm and inner product, respectively.
Upon introducing the spaces
\begin{align*}
    Q &= L_0^2(\Omega) := \left\{ q \in L^2(\Omega) , \int_\Omega q = 0 \right\}, \\
    \bX &= H_0^1(\Omega;\ \mathbb{R}^d) := \left\{ \bu \in H^1(\Omega;\ \mathbb{R}^d), \bu|_{\partial\Omega}  = \bzero \right\},
\end{align*}
we can now define the weak formulation of the NSE (\ref{eqn:nse-1})-(\ref{eqn:nse-2}) as: Find $\bu\in\bX$ and $p\in Q$ such that
\begin{equation}
\begin{cases}
\displaystyle\int_{\Omega} \frac{\partial \bu}{\partial t}\cdot \bv \; d{\bx}
	+ \nu (\nabla \bu,\nabla \bv)
	+ b(\bu, \bu, \bv) - (\nabla \cdot \bv, p)
	= (\bff, \bv), & \forall \bv \in \bX,\\
(\nabla \cdot \bu, q) = 0, & \forall q \in Q,
\end{cases}
\label{eq:NSE_weak}
\end{equation}
where the trilinear form $b: \bX \times \bX \times \bX \to \mathbb{R}$ is defined as follows:
\begin{eqnarray}
    b(\bu,\bv,\bw)
    = ((\bu \cdot \nabla ) \bv,\bw) .
    \label{eqn:definition-b}
\end{eqnarray}

We state the following regularity assumption 
(refer to Definition 29, Proposition 15, and Remark 10 
in \citet{layton2008introduction}): 
\begin{assumption}\label{ass:reg_ex_sol}
    In (\ref{eqn:nse-1})-(\ref{eqn:nse-2}), we assume that $\bff \in L^{2}\left(0, T ; L^{2}(\Omega;\ \mathbb{R}^d)\right)$, $\bu_{0} \in \bX$, $\bu \in L^{2}\left(0, T ; \bX \right)\bigcap L^{\infty}\left(0, T ; L^{2}(\Omega;\ \mathbb{R}^d)\right)$, $\nabla\bu \in L^{\infty}(0,T;L^2(\Omega;\ \mathbb{R}^{d\times d}))$, $\bu_{t} \in L^{2}\left(0, T ; \bX^{*}\right)$, and $p \in$ $L^{\infty}\left(0, T ; Q\right)$.
\end{assumption}

We next proceed to a time discretization by means of the backward differentiation formula of order 2 (BDF2).
Let $k = 1, \hdots, K - 1$ denote the index of the current time step $t_k$. For simplicity, we will assume that $t_{k + 1} - t_k = \Delta t$, i.e., equispaced timesteps, and $K \Delta t = T$. We introduce now two notations with subscript $_k$: the notation $\bu_k$ is reserved for the evaluation of the (strong) solution to \eqref{eqn:nse-1}-\eqref{eqn:nse-2} at time $t_k$, i.e., $\bu_k = \bu(t_k)$, while $\bu^{\Delta t}_k$ is employed to denote the unknown resulting after time discretization. Similar notations are adopted for the strong pressure $p_k = p(t_k)$ and its time discrete approximation $p^{\Delta t}_k$.

After semi-discretization with the BDF2 scheme, \eqref{eq:NSE_weak} becomes

\begin{equation}
\begin{cases}
\displaystyle\int_{\Omega} \frac{3 \bu^{\Delta t}_{k + 1} - 4 \bu^{\Delta t}_{k} + \bu^{\Delta t}_{k-1}}{2 \Delta t}\cdot \bv \; dx
	+ \nu (\nabla \bu^{\Delta t}_{k + 1},\nabla \bv)
	+ b^*(\bu^{\Delta t}_{k + 1}, \bu^{\Delta t}_{k + 1}, \bv) &
	\\
	\qquad\qquad- (\nabla \cdot \bv, p^{\Delta t}_{k + 1})
	= (\bff_{k + 1}, \bv), & \forall \bv \in \bX,\\
(\nabla \cdot \bu^{\Delta t}_{k + 1}, q) = 0, & \forall q \in Q.
\end{cases}
\label{eq:NSE_weak_semi}
\end{equation}
Following, e.g., \citet{layton2008introduction,temam2001navier}, the skew-symmetric trilinear form $b^*$ which appears in \eqref{eq:NSE_weak_semi} is defined as follows:
$b^*: \bX \times \bX \times \bX \to \mathbb{R}$, such that,
\begin{eqnarray}
b^*(\bu,\bv,\bw) = \frac{1}{2} \left[ ((\bu \cdot \nabla ) \bv,\bw) - ((\bu\cdot \nabla) \bw, \bv) \right] .
    \label{eqn:definition-b-skew-symmetric}
\end{eqnarray}
As explained in~\citep[Section 7.1]{layton2008introduction}, we use the skew-symmetric trilinear form $b^*$ in~\eqref{eq:NSE_weak_semi} instead of the trilinear form that we used in~\eqref{eq:NSE_weak} to ensure the stability of the velocity.
We also note that the trilinear forms $b$ and $b^*$ are equivalent in the continuous case (i.e., $b(\bu,\bv,\bw) = b^*(\bu,\bv,\bw), \forall \, \bu \in \bX, \bv \in \bX, \bw \in \bX$, see~\citep[Lemma 18]{layton2008introduction}), but not in the discrete case. Furthermore, we recall the following result from \citep[Lemmas 13 and 18]{layton2008introduction}:
\begin{lemma}
\label{Lemma:skew_Symmetric}
    For all $\bu,\bw,\bv \in \bX$, the skew-symmetric trilinear form $b^*(\cdot,\cdot,\cdot)$ satisfies
    \begin{equation*}
        b^*(\bu,\bv,\bv) = 0, 
    \end{equation*}
    \begin{equation*}
        b^*(\bu,\bv,\bw) \leq C \, \| \nabla \bu\| \, \| \nabla \bv \| \, \| \nabla \bw \|,
    \end{equation*}
    and a sharper bound
    \begin{equation*}
        b^*(\bu,\bv,\bw) \leq C \sqrt{\|\bu\| \, \| \nabla \bu\|}  \, \| \nabla \bv \| \, \| \nabla \bw \|.
    \end{equation*}
\end{lemma}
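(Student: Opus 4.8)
The plan is to establish the three assertions directly from the definition \eqref{eqn:definition-b-skew-symmetric}, using only Hölder's inequality and the standard Sobolev embeddings on the bounded domain $\Omega\subset\mathbb{R}^d$ with $d\in\{2,3\}$. The identity $b^*(\bu,\bv,\bv)=0$ is immediate: setting $\bw=\bv$ in \eqref{eqn:definition-b-skew-symmetric} makes the two terms cancel exactly, with no integration by parts and no use of $\nabla\cdot\bu=0$. For the two continuity estimates, the idea is to bound each of the two trilinear terms in \eqref{eqn:definition-b-skew-symmetric} separately and then combine them via the triangle inequality and the prefactor $\frac{1}{2}$.

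For the first (coarser) bound I would apply Hölder's inequality with exponents $(4,2,4)$ to obtain $|((\bu\cdot\nabla)\bv,\bw)|\le\|\bu\|_{L^4}\,\|\nabla\bv\|\,\|\bw\|_{L^4}$, and likewise for $((\bu\cdot\nabla)\bw,\bv)$. Since $d\le 4$, the embedding $H^1_0(\Omega)\hookrightarrow L^4(\Omega)$ holds, and because $\bX=H^1_0(\Omega;\mathbb{R}^d)$ the Poincar\'e--Friedrichs inequality allows the full $H^1$ norm to be replaced by $\|\nabla\cdot\|$; hence $\|\bu\|_{L^4}\le C\|\nabla\bu\|$ and $\|\bw\|_{L^4}\le C\|\nabla\bw\|$, which gives the stated bound after summing the two contributions.

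For the sharper bound I would instead use Hölder with exponents $(3,2,6)$, so that $|((\bu\cdot\nabla)\bv,\bw)|\le\|\bu\|_{L^3}\,\|\nabla\bv\|\,\|\bw\|_{L^6}$, estimate $\|\bw\|_{L^6}\le C\|\nabla\bw\|$ using $H^1_0(\Omega)\hookrightarrow L^6(\Omega)$ (valid for $d\le 3$), and interpolate $\|\bu\|_{L^3}\le\|\bu\|^{1/2}\,\|\bu\|_{L^6}^{1/2}\le C\|\bu\|^{1/2}\|\nabla\bu\|^{1/2}=C\sqrt{\|\bu\|\,\|\nabla\bu\|}$. Handling $((\bu\cdot\nabla)\bw,\bv)$ in the same way and summing yields the sharper estimate.

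I expect the only delicate point to be the dimension dependence of the Sobolev exponents: the splittings $(4,2,4)$ and $(3,2,6)$ are chosen precisely so that the resulting embeddings hold for $d\le 3$ (with $d=4$ being the borderline case for the first one), and one must check that the embedding and Poincar\'e constants depend only on $\Omega$, so that the constant $C$ in the statement is genuinely uniform. Everything else is routine bookkeeping, and the result is classical; see \citet[Lemmas 13 and 18]{layton2008introduction} for the corresponding FOM statement.
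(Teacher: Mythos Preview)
Your argument is correct and is the standard route to these estimates. Note, however, that the paper does not actually prove this lemma: it is stated as a recall of \citep[Lemmas~13 and~18]{layton2008introduction}, with no proof given. Your sketch is essentially the argument one finds in that reference, so there is nothing to compare against beyond observing that you have supplied the details the paper chose to omit.
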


For later use, we also recall the following discrete version of Gronwall lemma, as stated in \citep[Lemma 2.18]{layton2008numerical}.
\begin{lemma}[Discrete Gronwall Lemma]
\label{lemma:discrete_gronwell}
   Let $\Delta t > 0$, $E > 0$.
   Furthermore, let $\left\{a_k\right\}_{k = 0}^K,$
   $\left\{b_k\right\}_{k = 0}^K,$ $\left\{c_k\right\}_{k = 0}^K,$ $\left\{d_k\right\}_{k = 0}^K$ be nonnegative sequences such that 
   \begin{equation}
       a_K + \Delta t \sum_{k=0}^K b_k \leq \Delta t \sum_{k=0}^K d_k a_k + \Delta t \sum_{k = 0}^K c_k + E       
   \end{equation}
   and
   \begin{equation}
   d_k < \frac{1}{\Delta t} \quad \text{ for } k = 0, \hdots, K.
   \end{equation}
   Then,
   \begin{equation}
         a_K + \Delta t \sum_{k=0}^K b_k \leq \exp  \left( \Delta t \sum_{k=0}^K \frac{d_k}{1 - \Delta t d_k} \right) \left( \Delta t \sum_{k=0}^K c_k  + E\right).
   \end{equation}
\end{lemma}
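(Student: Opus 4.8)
The statement is the classical discrete Gronwall inequality, so I will only indicate the argument. First, the hypothesis should be read as holding not merely at the final index $K$ but at every index $n$ with $0 \le n \le K$ (in the energy estimates derived later in the paper such an inequality is produced at each time step); the assertion is then the case $n = K$. For $0 \le n \le K$ set
\[
\phi_n := a_n + \Delta t \sum_{k=0}^n b_k, \qquad S_n := \Delta t \sum_{k=0}^n c_k + E, \qquad P_n := \prod_{k=0}^n \frac{1}{1 - \Delta t d_k},
\]
with the convention $P_{-1} := 1$. Since $d_k < 1/\Delta t$, every factor $1 - \Delta t d_k$ lies in $(0, 1]$, so $P_n$ is well defined and nondecreasing, and $S_n \le S_K$ because $c_k \ge 0$. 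The plan is to prove by induction on $n$ the sharpened estimate $\phi_n \le P_n\, S_K$, and then to replace the product $P_K$ by the exponential appearing in the statement.

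For the induction step I would begin from the hypothesis at index $n$, split off the term $k = n$ in $\Delta t \sum_{k=0}^n d_k a_k$, and move $\Delta t d_n a_n$ to the left-hand side; because $1 - \Delta t d_n > 0$ this is legitimate and produces
\[
(1 - \Delta t d_n)\, a_n + \Delta t \sum_{k=0}^n b_k \;\le\; \Delta t \sum_{k=0}^{n-1} d_k a_k + S_n .
\]
Since $0 < 1 - \Delta t d_n \le 1$ and $b_k \ge 0$, the left-hand side dominates $(1 - \Delta t d_n)\, \phi_n$; using also $a_k \le \phi_k$, $S_n \le S_K$, and the induction hypothesis $\phi_k \le P_k S_K$ for $k \le n - 1$ one gets
\[
(1 - \Delta t d_n)\, \phi_n \;\le\; \Delta t \sum_{k=0}^{n-1} d_k \phi_k + S_K \;\le\; \Bigl( 1 + \Delta t \sum_{k=0}^{n-1} d_k P_k \Bigr) S_K .
\]
The remaining ingredient is the identity $1 + \Delta t \sum_{k=0}^{n-1} d_k P_k = P_{n-1}$, which follows at once from the defining relation $(1 - \Delta t d_m) P_m = P_{m-1}$: rewriting it as $\Delta t d_m P_m = P_m - P_{m-1}$ makes the sum telescope. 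Substituting gives $(1 - \Delta t d_n)\, \phi_n \le P_{n-1} S_K$, i.e. $\phi_n \le P_n S_K$, which closes the induction; the base case $n = 0$ is the same computation with an empty sum.

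Finally, the elementary inequality $1 + x \le e^x$ with $x = \Delta t d_k / (1 - \Delta t d_k) \ge 0$ gives $\tfrac{1}{1 - \Delta t d_k} = 1 + \tfrac{\Delta t d_k}{1 - \Delta t d_k} \le \exp\!\bigl( \tfrac{\Delta t d_k}{1 - \Delta t d_k} \bigr)$, so that $P_K \le \exp\!\bigl( \Delta t \sum_{k=0}^K \tfrac{d_k}{1 - \Delta t d_k} \bigr)$; combined with $\phi_K \le P_K S_K$ this is exactly the claimed bound. I do not anticipate a genuine obstacle beyond bookkeeping. The one point that must be handled carefully is the choice of induction invariant: propagating the exponential directly through the induction fails — a naive bound there would require $1 + x e^x \le e^x$, which is false — whereas the multiplicative quantity $P_n$ telescopes cleanly, the exponential being introduced only at the very end.
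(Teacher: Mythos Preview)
Your argument is correct and is the standard proof of the discrete Gronwall inequality. However, the paper does not actually prove this lemma: it is stated as a recalled result, with a citation to \cite[Lemma~2.18]{layton2008numerical} and no proof given. So there is nothing in the paper to compare your approach against; you have supplied a valid proof where the authors simply quote the literature. Your remark about the need to interpret the hypothesis as holding for every intermediate index $n$, not just the final one, is well taken and matches how the lemma is applied later in Theorem~\ref{thm:32}.
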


The fully discrete scheme is finally obtained by applying a futher discretization in space by means of 
the finite element method. We use the Taylor-Hood finite element spaces $\bX^h \subset \bX$ and $Q^h \subset Q$, where $\bX^h$ is the finite element space of piecewise polynomials of order $m+1$, with $m$ a positive integer, 
$Q^h$ is the finite element space of piecewise polynomials of order $m$, and $h$ denotes the mesh size.
The Taylor-Hood finite element spaces yield the following finite element discretization:
Given $\bu^h_k \in \bX^h$, find $\bu^h_{k+1} \in \bX^h, \, p^h_{k+1} \in Q^h$, such that,\footnote{The notation for the fully discrete solutions should have been $\bu^{\Delta t, h}_{k}$ and $p^{\Delta t, h}_{k}$, as a reminder that they are resulting from a time discretization with time step $\Delta t$ and space discretization with mesh size $h$. We prefer the simpler notation $\bu^{h}_{k}$ and $p^{h}_{k}$ because the semi-discrete scheme in \eqref{eq:NSE_weak_semi} is just an intermediate step towards \eqref{eq:NSE_weak_fom}, and neither the scheme nor its solutions $\bu^{\Delta t}_{k}$ and $p^{\Delta t}_{k}$ will be used 
afterwards.}
\begin{equation}
\begin{cases}
\displaystyle\int_{\Omega} \frac{3 \bu^h_{k + 1} - 4 \bu^h_{k} + \bu^h_{k-1}}{2\Delta t}\cdot \bv^h \; dx
	+ \nu (\nabla \bu^h_{k + 1},\nabla \bv^h)
	+ b^*(\bu^h_{k + 1}, \bu^h_{k + 1}, \bv^h) &
	\\
	\qquad\qquad- (\nabla \cdot \bv^h, p^h_{k + 1})
	= (\bff^h_{k + 1}, \bv^h), & \forall \bv^h \in X^h,\\
(\nabla \cdot \bu^h_{k + 1}, q^h) = 0, & \forall q^h \in Q^h.
\end{cases}
\label{eq:NSE_weak_fom}
\end{equation}

In the following, the finite element formulation in \eqref{eq:NSE_weak_fom} will be considered as the FOM scheme.
We 
also assume that the FOM scheme satisfies the following asymptotic error bound, which was also assumed in~\citep[Assumption 2.2]{xie2018numerical}:
\begin{assumption}
    We assume that the FE approximation $\bu^{h}$ satisfies the following error 
    bound: 
    \begin{eqnarray}
        \| \bu_{K} - \bu^{h}_{K} \|^{2}
        + \Delta t \; \sum_{k=1}^{K} \| \nabla (\bu_{k} - \bu^{h}_{k}) \|^{2}
        \leq C \left(
            h^{2m+2}
            + \Delta t^4
        \right).
        \label{eqn:assumption-finite-element-1}
    \end{eqnarray}
    We also assume the following standard approximation property: 
    \begin{equation}
        \inf_{q^h \in Q^h} \| p - q^h \| \leq C h^{m+1} \qquad \text{for any } p \in Q.
    \end{equation}
    \label{assumption:finite-element}
\end{assumption}
In~\eqref{eqn:assumption-finite-element-1}, and in what follows, we denote with $C$ a generic constant that can depend on the problem data, but not on the finite element parameters (e.g., $h, m, \Delta t$) or, in the context of the next section, the ROM parameters (e.g., $r, \lambda_j$). We note that it is reasonable to assume that \eqref{eqn:assumption-finite-element-1} holds since a similar error bound was indeed proved, e.g., in \citet{layton2008introduction} for the $\mathbbm{P}^{m+1} / \mathbbm{P}^{m}$ Taylor-Hood pair for the FE space discretization, and a second-order time stepping scheme; see also 
error bound (2.7) in \citet{mohebujjaman2017energy}.

\section{Reduced Order Model Preliminaries}
    \label{sec:rom-preliminaries}

\subsection{Proper Orthogonal Decomposition}

We will briefly discuss proper orthogonal decomposition, referred to as POD, which is one of the primary techniques used in Galerkin reduced order modeling to obtain a low-dimensional basis. A more thorough presentation of the topic is given in, e.g., \citet{KV01}. 
For a specific time $t_k$, we call $\bu(\bx,t_k)$ a velocity snapshot. This snapshot contains data for all $\bx \in \Omega$ at that specific time. If the exact solution $\bu$ were available, then $\bu_k(\bx)$ could be used as snapshot at time $t_k$; in practice, since the exact solution is not available, $\bu_k^h(\bx)$ is used instead.

Given an ensemble of snapshots 
$\{\bu(\bx,t_k)\}_{k=0}^K$,
the POD method obtains a low-dimensional basis, $\{\bvarphi_1, \dots \bvarphi_r\}$, that 
approximates the given data by solving the following minimization problem \citep{KV01}:
\begin{equation}
    \min_{\{\bvarphi_j\}_{j=1}^r} \frac{1}{K+1} \sum_{k=1}^K \left\| \bu(\bx,t_k) - \sum_{j=1}^r \bigg( \bu(\bx,t_k), \bvarphi_j(\bx) \bigg) \bvarphi_j(\bx) \right\|^2,
\label{eqn:POD_minimization}
\end{equation}
subject to $(\bvarphi_i,\bvarphi_j) = \delta_{ij}$, the Kronecker delta.
We introduce the correlation matrix $\mathcal{C} \in \mathbb{R}^{(K+1) \times (K+1)}$, where $\mathcal{C}_{k \ell} := \frac{1}{K+1} \left( \bu(\bx,t_k), \bu(\bx,t_\ell)\right) $ for $k, \ell = 0, \dots, K$. The solution to \eqref{eqn:POD_minimization} may be found by solving the eigenvalue problem $\mathcal{C} \bv_j = \lambda_j \bv_j$. These eigenvalues $\lambda_j$ are positive and sorted 
in descending order $\lambda_1 \geq \dots \geq \lambda_r \geq \dots \geq \lambda_R \geq 0$, where $R =$ rank($\mathcal{C}$).
Then, a set of basis functions of dimension $r \leq R$ satisfying \eqref{eqn:POD_minimization} can be shown to be given by $\bvarphi_j(\bx) = \frac{1}{\lambda_j} \sum_{k=1}^K (\bv_j)_k \bu(\bx,t_k), \; j = 1, \dots, r.$ Here, $(\bv_j)_k$ is the $k^\text{th}$ component of the $j^\text{th}$ eigenvector $\bv_j$. Importantly, choosing a basis of dimension $r < R$ leads to an approximation error equal to the sum of the discarded eigenvalues \citep{KV01}:
\begin{equation}
    \frac{1}{K+1} \sum_{k=1}^K \left\| \bu(\bx,t_k) - \sum_{j=1}^r \bigg( \bu(\bx,t_k), \bvarphi_j(\bx) \bigg) \varphi_j(\bx) \right\|^2 = \sum_{j=r+1}^R \lambda_j.
\end{equation}

The POD algorithm introduced above is now employed to define a reduced velocity space span$\{ \bvarphi_1, \dots, \bvarphi_r \} = \bX^r \subset \bX^h$. 
Finally, we introduce $S^{r}$, the ROM stiffness matrix corresponding to the diffusive term, with components $S^{r}_{ij} = \left( \nabla \bvarphi_i, \nabla \bvarphi_j \right)$. The spectral norm $\|S^{r}\|_2 = \sigma_{\max}(S^{r})$ of this matrix is important 
in the numerical analysis of the error 
introduced by POD, as illustrated next. Furthermore, it also appears in the following POD inverse inequality
\begin{equation}
\left\|\nabla \bv^r\right\| \leq \sqrt{\|S^{r} \|_2} \ \|\bv^r\| \qquad \forall \bv^r \in \bX^r,
\label{eq:POD_inv_ineq}
\end{equation}
as shown in \citep[Lemma 2 and Remark 2]{KV01}.

\subsubsection{ROM Projection}
In this section, we introduce the ROM projection and include several associated results that will be used in the ADL-ROM numerical analysis in Section~\ref{sec:adl-rom-numerical-analysis}.

\begin{definition}[ROM Projection]
    For any $\bu \in L^2(\Omega; \mathbb{R}^d)$, define $\bP^r(\bu)$ as the unique element of $\bX^r$ such that
    \begin{equation}
        \left(  \bP^r(\bu) , \bv^r \right) = (\bu, \bv^r)
        \qquad \forall \, \bv^r \in \bX^r.
    \end{equation}
    \label{definition:rom-projection}
\end{definition}

In the ADL-ROM numerical analysis in Section~\ref{sec:adl-rom-numerical-analysis}, 
for the ROM projection of the sequence $\left\{\bu_k\right\}_{k = 0}^K$ of weak
velocity solutions, we use the following lemma, which was proven in \citep[Lemma 3.3]{iliescu2014variational}:
\begin{lemma}
\label{lemma:L2_ROM_projection_error}
    The sequence of ROM projections $\left\{\bP^r(\bu_k)\right\}_{k = 0}^K$ satisfies the following inequalities:
    \begin{align}
        \frac{1}{K+1} \sum_{k=1}^K \| \bu_k - \bP^r(\bu_k) \| ^2 &\leq C \left(h^{2m+4} + \Delta t^4 + \sum_{j = r+1}^R \lambda_j\right) \label{eq:L2_ROM_projection_error_1}\\
        \frac{1}{K+1} \sum_{k=1}^K \|\nabla( \bu_k - \bP^r(\bu_k) )\| ^2 &\leq C \left(h^{2m+2} + \|S^{r} \|_2 \ h^{2m+4} + (1 + \| S^{r} \|_2) \Delta t^4 + \sum_{j = r+1}^R \| \nabla \bvarphi_j \|^2 \lambda_j \right).
        \label{eq:L2_ROM_projection_error_2}
    \end{align}
\end{lemma}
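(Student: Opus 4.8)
The result is established in \citep[Lemma 3.3]{iliescu2014variational}, and the plan is to reproduce that argument. The key preliminary observations are: (i) by Definition~\ref{definition:rom-projection}, $\bP^r$ is the $L^2(\Omega;\mathbb{R}^d)$-orthogonal projection onto $\bX^r$, so $\| \bP^r(\bw) \| \leq \| \bw \|$ for every $\bw \in L^2(\Omega;\mathbb{R}^d)$ (take $\bv^r = \bP^r(\bw)$ in the definition and use Cauchy--Schwarz), and $\bP^r(\bu_k)$ is the best $L^2$-approximation of $\bu_k$ in $\bX^r$; (ii) the snapshots $\bu^h_k$ all belong to $\bX^R = \mathrm{span}\{\bvarphi_1,\dots,\bvarphi_R\}$, hence $\bu^h_k - \bP^r(\bu^h_k) = \sum_{j=r+1}^R (\bu^h_k,\bvarphi_j)\,\bvarphi_j$; and (iii) the POD construction yields the time-averaged orthogonality relation $\frac{1}{K+1}\sum_{k=1}^K (\bu^h_k,\bvarphi_i)(\bu^h_k,\bvarphi_j) = \lambda_i\,\delta_{ij}$, which gives both $\frac{1}{K+1}\sum_{k=1}^K \| \bu^h_k - \bP^r(\bu^h_k) \|^2 = \sum_{j=r+1}^R \lambda_j$ and $\frac{1}{K+1}\sum_{k=1}^K \| \nabla(\bu^h_k - \bP^r(\bu^h_k)) \|^2 = \sum_{j=r+1}^R \| \nabla \bvarphi_j \|^2\, \lambda_j$.

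For \eqref{eq:L2_ROM_projection_error_1}, I would combine the best-approximation property with the triangle inequality through the FOM solution, $\| \bu_k - \bP^r(\bu_k) \| \leq \| \bu_k - \bP^r(\bu^h_k) \| \leq \| \bu_k - \bu^h_k \| + \| \bu^h_k - \bP^r(\bu^h_k) \|$, then square, sum over $k$, and divide by $K+1$. The second term is handled by (iii); the first term reduces the claim to the optimal $L^2$ finite element estimate $\frac{1}{K+1}\sum_{k=1}^K \| \bu_k - \bu^h_k \|^2 \leq C\,(h^{2m+4} + \Delta t^4)$.

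For \eqref{eq:L2_ROM_projection_error_2}, since $\bP^r$ is $L^2$-stable but not $H^1$-stable, I would instead use the exact splitting $\bu_k - \bP^r(\bu_k) = (\bu_k - \bu^h_k) + (\bu^h_k - \bP^r(\bu^h_k)) + \bP^r(\bu^h_k - \bu_k)$, apply $\nabla$, use $\| a + b + c \|^2 \leq 3(\| a \|^2 + \| b \|^2 + \| c \|^2)$, sum over $k$, and divide by $K+1$. The first contribution is bounded by the $H^1$ finite element estimate \eqref{eqn:assumption-finite-element-1} after writing $\frac{1}{K+1}\sum_{k=1}^K = \frac{1}{T}\,\Delta t \sum_{k=1}^K$, giving $C(h^{2m+2}+\Delta t^4)$; the second contribution equals $\sum_{j=r+1}^R \| \nabla \bvarphi_j \|^2 \lambda_j$ exactly, by (iii); and the third contribution is bounded using the POD inverse inequality \eqref{eq:POD_inv_ineq}, $\| \nabla \bP^r(\bu^h_k - \bu_k) \| \leq \sqrt{\| S^r \|_2}\, \| \bP^r(\bu^h_k - \bu_k) \| \leq \sqrt{\| S^r \|_2}\, \| \bu^h_k - \bu_k \|$, followed by the same optimal $L^2$ finite element estimate as above, giving $\| S^r \|_2\, C(h^{2m+4}+\Delta t^4)$. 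Collecting the three bounds yields \eqref{eq:L2_ROM_projection_error_2}.

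The step requiring the most care is the second gradient contribution: a crude triangle inequality applied to $\sum_{j=r+1}^R (\bu^h_k,\bvarphi_j)\nabla\bvarphi_j$ would be far too lossy (it would not produce the weighted sum $\sum_j \| \nabla\bvarphi_j \|^2 \lambda_j$), so one must expand $\| \cdot \|^2$, sum over $k$ first, and invoke the time-averaged POD orthogonality (iii) to annihilate the cross terms. A second point to watch is that the $h^{2m+4}$ powers appearing in both estimates rely on the \emph{optimal}, duality-based $L^2$ finite element velocity estimate $\frac{1}{K+1}\sum_{k=1}^K \| \bu_k - \bu^h_k \|^2 \leq C(h^{2m+4}+\Delta t^4)$; this is standard for the $\mathbbm{P}^{m+1}/\mathbbm{P}^{m}$ Taylor--Hood pair with BDF2 time stepping but is marginally stronger than the bound recorded in Assumption~\ref{assumption:finite-element}, so it is invoked here as an additional, classical ingredient.
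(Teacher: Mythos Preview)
Your proposal is correct and faithfully reproduces the argument of \citep[Lemma~3.3]{iliescu2014variational}, which is exactly what the paper does: it does not supply its own proof but simply cites that reference. Your flagging of the one subtlety --- that the optimal $L^2$ finite element bound $\frac{1}{K+1}\sum_k \|\bu_k-\bu_k^h\|^2 \leq C(h^{2m+4}+\Delta t^4)$ is marginally stronger than what is recorded in Assumption~\ref{assumption:finite-element} --- is accurate and appropriate.
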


Similarly to \citep[Assumption 4.1]{xie2018numerical}, for later use in Section \ref{sec:rom_df} and in the proof of Theorem~\ref{thm:32}, we 
state 
an assumption that is stronger than the result in \eqref{eq:L2_ROM_projection_error_1}-\eqref{eq:L2_ROM_projection_error_2}.

\begin{assumption}
\label{assump:POD_approximation}
    Assume that the sequence of ROM projections $\left\{\bP^r(\bu_k)\right\}_{k = 0}^K$ satisfies
    \begin{align}
        \| \bu_k - \bP^r(\bu_k) \|^2 &\leq C \left(h^{2m+4} + \Delta t^4 + \sum_{j = r+1}^R \lambda_j\right) \label{eq:POD_approximation_1}\\
        \|\nabla( \bu_k - \bP^r(\bu_k) )\|^2 &\leq C \left(h^{2m+2} + \|S^{r} \|_2  \ h^{2m+4} + (1 + \| S^{r} \|_2) \Delta t^4 + \sum_{j = r+1}^R \| \nabla \bvarphi_j \|^2 \lambda_j \right). 
        \label{eq:POD_approximation_2}
    \end{align}

\end{assumption}

\begin{remark}
    The pointwise in time error bounds 
    in Assumption~\ref{assump:POD_approximation} are needed in the proof of Theorem~\ref{thm:32}.
    Specifically, we use \eqref{eq:POD_approximation_2} to prove 
    Lemma~\ref{lemma:assumption-u-gradu-gradeta}.
    Furthermore, as noted in the proof of 
    Lemma~\ref{lemma:assumption-u-gradu-gradeta},
    using \eqref{eq:L2_ROM_projection_error_2} instead of \eqref{eq:POD_approximation_2} to prove inequality~\eqref{eqn:32_eq20} would yield a suboptimal bound.

    The motivation for Assumption~\ref{assump:POD_approximation} was given in~\citet{koc2021optimal} (see also Remark 3.2 in~\citet{iliescu2014variational}), where we showed that using the bounds in Lemma~\ref{lemma:L2_ROM_projection_error} can yield suboptimal error bounds.
    Specifically, in~\citet{koc2021optimal} we constructed two counterexamples that showed that Assumption~\ref{assump:POD_approximation} can fail and, in that case, the error is suboptimal.

    Furthermore, in~\citet{koc2021optimal} we proved that using both the snapshots and the snapshot difference quotients to construct the ROM basis, optimal error bounds can be proved without making Assumption~\ref{assump:POD_approximation}.
    This result was further improved in~\citet{eskew2023new,garcia2023second}, where it was shown that, to prove optimal error bounds, it is sufficient to use only the snapshot difference quotients and the snapshot at the initial time or the mean value of the snapshots.
    Further improvements were recently proved in~\citet{garcia2024pointwise}.
    
    For simplicity, in this paper we do not include the snapshot difference quotients and instead assume the pointwise in time error bounds in Assumption~\ref{assump:POD_approximation}.
     
    \label{remark:assumption-dq}
\end{remark}

\subsection{Galerkin ROM}\label{sec:galerkin_rom}
The Galerkin ROM (G-ROM) employs both POD truncation and a Galerkin method to simplify complex fluid dynamics problems. POD truncation facilitates an approximation of the velocity field by expressing it as a linear combination of the truncated POD basis functions. Mathematically, this can be represented as 
\begin{equation}
\bu(\bx, t) \approx \bu^{r}(\bx, t) \equiv \sum_{j=1}^{r} a_{j}(t) \bvarphi_{j}(\mathbf{x}).
\end{equation}
Here, $\bu(\bx,t)$ is the velocity field, $\bu^r(\bx,t)$ is its ROM approximation, $\left\{ \bvarphi_j 
\right\}_{j=1}^{r}$ are the POD basis functions, and $\left\{ a_j(t)\right\}_{j=1}^{r}$ are the time-dependent coefficients that need to be determined. By substituting the ROM velocity field $\bu^r$ for the full-order velocity field $\bu$ in the 
NSE (\ref{eqn:nse-1})-(\ref{eqn:nse-2}), and applying the Galerkin method, we project the resulting equations onto the ROM space $\bX^r$. This process leads to the formulation of the G-ROM for the NSE, which can be stated as: Find $\bu^r \in \bX^r$ such that
\begin{equation}\label{eqn:grom_for}
\left(\frac{\partial \bu^{r}}{\partial t}, \bvarphi\right)+\nu\left(\nabla \bu^{r}, \nabla \bvarphi\right)+b^{*}\left(\bu^{r}, \bu^{r}, \bvarphi\right)=(\bff, \bvarphi), \quad \forall \bvarphi \in \bX^{r}
\end{equation}
and $\bu^{r}(\cdot, 0) \in \bX^{r}$. The error analysis of the spatial and temporal discretizations for the G-ROM \eqref{eqn:grom_for} has been extensively studied in the literature; see, e.g.,~\citet{chapelle2012galerkin,iliescu2014are,kostova2015error,KV01,KV02,luo2008mixed,singler2014new}. These studies address various aspects of the accuracy and stability of the ROM, providing insights into its performance under different conditions. We note that ROM velocities in $\bX^r$ are weakly-divergence free, and hence \eqref{eqn:grom_for} does not include any pressure contribution. If a ROM pressure approximation is desired, alternative ROMs are devised, e.g., in \citet{ballarin2015supremizer,decaria2020artificial,kean2020error,noack2005need,rozza2007stability,rubino2020numerical,stabile2018finite}.
In this paper, we will not include a ROM pressure approximation since the novelty of the proposed ADL-ROM lies in the different treatment of the non-linear term of the NSE.

Employing the same time stepping scheme as in the FOM, the following 
algorithm summarizes the BDF2 G-ROM scheme:

\begin{algorithmplain}[BDF2 G-ROM Scheme]\label{alg:BDF2_scheme_G}
    For $\Delta t > 0$ and for $k = 1, \dots, K-1$, given $\bu_{k-1}^r,\bu_{k}^r \in \bX^r$, find $\bu^r_{k+1} \in \bX^r$ such that
    \begin{align}\label{eqn:BDF2_eq_G}
        \frac{\left(3\bu_{k+1}^r - 4 \bu_{k}^r + \bu_{k-1}^r, \bv^r\right)}{2\Delta t}
        + \nu \left(\nabla \bu_{k+1}^r,\nabla \bv^r\right)
        + b^*\left(\bu_{k+1}^r, \bu_{k+1}^r, \bv^r\right)  \nonumber  \\
        = \left(\bff_{k+1}, \bv^r\right) \;\; \forall \bv^r \in \bX^r.
    \end{align}
\end{algorithmplain}

Despite its significant computational efficiency due to POD truncation, the G-ROM \eqref{eqn:BDF2_eq_G} has generally been limited to modeling laminar flows due to its inherent assumptions and simplifications. To extend the applicability of the ROM to more complex and turbulent flow scenarios, we consider the development of the approximate deconvolution Leray ROM, which we will build in the remaining part of this section.

\subsection{ROM Differential Filter}\label{sec:rom_df}

The regularization scheme we employ is based on differential filtering. In this work, we use the following strong form of the differential filter. 
\begin{definition}[Continuous Differential Filter]
    Let $\delta \in \mathbb{R}^+$ be a constant filter radius. Given $\bphi \in C^0(\Omega; \ \mathbb{R}^d)$, the continuous differential filter is a map 
    $$F: C^0(\Omega; \ \mathbb{R}^d) \to C^2(\Omega; \ \mathbb{R}^d), \quad \bphi \mapsto \obphi$$ such that $\obphi \in C^2(\Omega; \ \mathbb{R}^d)$ is the unique solution to
    \begin{equation}
    \label{EQ:continuous_filter}
    \begin{cases}
    (I - \delta^2 \Delta) \obphi = \bphi, & \text{in } \Omega,\\
    \obphi = \bzero, & \text{on }\partial\Omega.
    \end{cases}
    \end{equation}
    \label{def:filter_definition}
\end{definition}

The continuous differential filter as described above is impractical in combination with the FOM or the ROM, because it is based on a strong formulation of the operator $(I - \delta^2 \Delta)$. Instead, a weak formulation of $(I - \delta^2 \Delta)$ will be used in the definition of the following ROM differential filter.

\begin{definition}[ROM Differential Filter]
    Let $\delta \in \mathbb{R}^+$ be a constant filter radius. Given $\bphi \in L^2(\Omega; \ \mathbb{R}^d)$, the ROM differential filter is a map 
    $$F^r: L^2(\Omega; \ \mathbb{R}^d) \to \bX^r, \quad \bphi \mapsto \obphi$$ such that $\obphi \in \bX^r$ is the unique solution to
    \begin{equation}
    \label{EQ:discrete_filter}
    \delta^2(\nabla \obphi, \nabla \bv^r) + (\obphi, \bv^r) = (\bphi,\bv^r) \quad \forall \bv^r \in \bX^r.
    \end{equation}
\end{definition}
The symbol $\obullet$ will be reserved in the following as 
shorthand for the application of $F^r 
$ to the input $\bullet$. The typical input will be a velocity field $\bu \in \bX \subset L^2(\Omega; \ \mathbb{R}^d)$. We further highlight that $\obullet$ does depend on $r$ through the reduced velocity space $\bX^r$. 
To simplify the notation, we will not add a further symbol $r$ to the overline bar. 

Lemma \ref{thm:Xie_Lemma212}, which we present below, was proven in \citep[Lemma 4.3]{xie2018numerical} based on Assumption \ref{assump:POD_approximation}. We note that, in the original lemma in \citet{xie2018numerical}, 
the LHS included an additional addend, 
which we drop 
since it is not used in our analysis.
The bound in Lemma \ref{thm:Xie_Lemma212} categorizes the sources of error in the filtering process: those which belong to the finite element data, those which belong to the POD process, those which depend on the filtering process, and mixed terms. 

\begin{lemma}
\label{thm:Xie_Lemma212}
    Let $\bu_k = \bu(\bx,t_k) \in \bX$ be a solution to~\eqref{eq:NSE_weak} with $\Delta \bu_k\in L^2(\Omega).$ 
    If Assumption \ref{assump:POD_approximation} holds, then, for all $k$,  

\begin{align*}
    \| \bu_k - \obu_k \|^2 &\leq  C\left(h^{2m+4} + \Delta t^4 + \sum_{j=r+1}^R \lambda_j\right) \\
    &+ C \delta^2 \left( h^{2m+2} + \| S^{r} \|_2 \ h^{2m+4} +  (1 + \|S^{r} \|_2) \ \Delta t^4 + \sum_{j=r+1}^R \| \nabla \bvarphi_j \|^2 \lambda_j \right) \\
    &+ C \delta^4 \| \Delta \bu_k \|^2.
\end{align*}
\end{lemma}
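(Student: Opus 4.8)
The plan is to insert the continuous differential filter $F(\bu_k)$ from Definition~\ref{def:filter_definition} as an intermediate quantity and estimate, by the triangle inequality,
\[
\| \bu_k - \obu_k \|^2 \le 2\, \| \bu_k - F(\bu_k) \|^2 + 2\, \| F(\bu_k) - \obu_k \|^2 .
\]
The first term is a pure filter--consistency error, independent of the ROM; the second is the error between the continuous filter and its ROM Galerkin counterpart, which I would bound by a C\'ea-type best-approximation estimate and then reduce to Assumption~\ref{assump:POD_approximation}.

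For the consistency term, I would use that $F(\bu_k) \in H^2(\Omega;\ \mathbb{R}^d) \cap \bX$ solves $(I - \delta^2\Delta) F(\bu_k) = \bu_k$, so that $F(\bu_k) - \bu_k = \delta^2 F(\Delta \bu_k)$ after applying $(I-\delta^2\Delta)^{-1}$, which is legitimate since $\Delta \bu_k \in L^2(\Omega)$ by hypothesis. Combined with the standard contractivity and smoothing properties of the differential filter, namely $\| F(g) \| \le \| g \|$ and $\| \nabla F(g) \| \le (2\delta)^{-1} \| g \|$ (the latter obtained by testing the weak filter equation with $F(g)$ and using Young's inequality), this yields
\[
\| \bu_k - F(\bu_k) \| \le \delta^2 \| \Delta \bu_k \|
\qquad \text{and} \qquad
\| \nabla(\bu_k - F(\bu_k)) \| \le \tfrac{\delta}{2}\, \| \Delta \bu_k \| .
\]

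For $\| F(\bu_k) - \obu_k \|$, I would subtract the ROM differential filter identity~\eqref{EQ:discrete_filter} satisfied by $\obu_k$ from the weak form of the continuous filter equation satisfied by $F(\bu_k)$, restricting the test functions to $\bv^r \in \bX^r \subset \bX$. This gives Galerkin orthogonality of $F(\bu_k) - \obu_k$ in the symmetric coercive bilinear form $a(\bu,\bv) := \delta^2(\nabla\bu,\nabla\bv) + (\bu,\bv)$, so $\obu_k$ is the $a$-projection of $F(\bu_k)$ onto $\bX^r$ and, for every $\bw^r \in \bX^r$,
\[
\| F(\bu_k) - \obu_k \|^2 \le \| F(\bu_k) - \obu_k \|^2 + \delta^2 \| \nabla(F(\bu_k) - \obu_k) \|^2 \le \| F(\bu_k) - \bw^r \|^2 + \delta^2 \| \nabla(F(\bu_k) - \bw^r) \|^2 .
\]
I would then choose $\bw^r = \bP^r(\bu_k)$ and split both terms through $\bu_k$ by the triangle inequality: the $F(\bu_k) - \bu_k$ contributions are controlled by the consistency estimates above (contributing terms of order $\delta^4 \| \Delta \bu_k \|^2$), while the $\bu_k - \bP^r(\bu_k)$ contributions are exactly what Assumption~\ref{assump:POD_approximation} bounds --- in $L^2$ by~\eqref{eq:POD_approximation_1} and in $H^1$, carrying the prefactor $\delta^2$, by~\eqref{eq:POD_approximation_2}.

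Collecting the three groups of contributions --- $\| \bu_k - \bP^r(\bu_k) \|^2$ producing the first line of the claim, $\delta^2 \| \nabla(\bu_k - \bP^r(\bu_k)) \|^2$ producing the second line, and the filter-consistency error producing the $C\delta^4 \| \Delta \bu_k \|^2$ term --- and absorbing all numerical constants into the generic $C$ yields the stated bound. I expect the only genuinely delicate point to be the consistency step: justifying the identity $F(\bu_k) - \bu_k = \delta^2 F(\Delta \bu_k)$ and the companion gradient estimate rigorously requires the hypothesis $\Delta \bu_k \in L^2(\Omega)$ together with $H^2$ elliptic regularity for $(I - \delta^2\Delta)^{-1}$ under homogeneous Dirichlet boundary conditions; everything else reduces to triangle inequalities, Young's inequality, and a direct appeal to Assumption~\ref{assump:POD_approximation}.
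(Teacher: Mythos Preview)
Your argument is correct and is precisely the standard route to this estimate: split through the continuous filter $F(\bu_k)$, use the identity $\bu_k - F(\bu_k) = -\delta^2 F(\Delta\bu_k)$ (valid because $\bu_k\in\bX$ so the boundary traces match) together with the smoothing bounds for $F$ to handle the consistency piece, and use Galerkin orthogonality in the $a$-bilinear form to reduce $\|F(\bu_k)-\obu_k\|$ to a best-approximation error with the choice $\bw^r=\bP^r(\bu_k)$, which Assumption~\ref{assump:POD_approximation} then controls. The three groups of terms you collect are exactly the three lines of the claimed bound.

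The paper itself does not prove this lemma; it simply cites \cite[Lemma~4.3]{xie2018numerical}. Your outline is essentially the argument one finds there, so there is nothing to contrast. One cosmetic point: the constant in your smoothing estimate $\|\nabla F(g)\|\le (2\delta)^{-1}\|g\|$ is actually $1/(\sqrt{2}\,\delta)$ from the energy computation you describe, but of course this is absorbed into~$C$.
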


We further recall the following estimates from \citep[Lemma 4.4]{xie2018numerical}:
\begin{lemma}\label{thm:ROM211_split1}
Let $\bu \in \bX$. Then

\begin{align}
    \left\|\obu\right\| & \leq\|\bu\|, \label{eqn:Lem211_f1}\\
    \left\|\nabla \obu\right\| & \leq \sqrt{\|S^{r} \|_2} \ \|\bu\|.\label{eqn:Lem211_f2}
\end{align}
Furthermore, if $\bu \in \bX^r$, then
\begin{align}
\left\|\nabla \obu\right\| & \leq\|\nabla \bu\|. \label{eqn:Lem211_f3}
\end{align}

\end{lemma}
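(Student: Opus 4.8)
The plan is to obtain all three estimates by testing the ROM differential filter equation \eqref{EQ:discrete_filter} with judiciously chosen elements of $\bX^r$ and then combining the resulting energy identities with the Cauchy--Schwarz inequality and the POD inverse inequality \eqref{eq:POD_inv_ineq}. No compactness or regularity machinery is needed; everything follows from the coercivity and symmetry of the bilinear form $a(\bphi,\bpsi):=\delta^2(\nabla\bphi,\nabla\bpsi)+(\bphi,\bpsi)$ on $\bX^r$.

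First I would prove \eqref{eqn:Lem211_f1}. Choosing $\bv^r=\obu\in\bX^r$ in \eqref{EQ:discrete_filter} gives the energy identity $\delta^2\|\nabla\obu\|^2+\|\obu\|^2=(\bu,\obu)$. Discarding the nonnegative term $\delta^2\|\nabla\obu\|^2$ and applying Cauchy--Schwarz to the right-hand side yields $\|\obu\|^2\le\|\bu\|\,\|\obu\|$, hence $\|\obu\|\le\|\bu\|$. For \eqref{eqn:Lem211_f2}, since $\obu\in\bX^r$ the POD inverse inequality \eqref{eq:POD_inv_ineq} applies directly with $\bv^r=\obu$, giving $\|\nabla\obu\|\le\sqrt{\|S^r\|_2}\,\|\obu\|$, and combining this with the already established \eqref{eqn:Lem211_f1} produces $\|\nabla\obu\|\le\sqrt{\|S^r\|_2}\,\|\bu\|$. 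I would point out here that the naive energy estimate $\delta^2\|\nabla\obu\|^2\le(\bu,\obu)\le\|\bu\|^2$ would only deliver $\|\nabla\obu\|\le\delta^{-1}\|\bu\|$, whose constant degenerates as $\delta\to0$; invoking \eqref{eq:POD_inv_ineq} is precisely what yields the sharper, $\delta$-independent constant.

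For \eqref{eqn:Lem211_f3} I would exploit the extra hypothesis $\bu\in\bX^r$, which makes $\bu$ itself an admissible test function. Testing \eqref{EQ:discrete_filter} once with $\bv^r=\bu$ gives $\delta^2(\nabla\obu,\nabla\bu)+(\obu,\bu)=\|\bu\|^2$, and once with $\bv^r=\obu$ gives $\delta^2\|\nabla\obu\|^2+\|\obu\|^2=(\bu,\obu)$. Subtracting these two identities and regrouping the $L^2$ terms shows $\delta^2\bigl[(\nabla\obu,\nabla\bu)-\|\nabla\obu\|^2\bigr]=\|\bu-\obu\|^2\ge0$, whence $\|\nabla\obu\|^2\le(\nabla\obu,\nabla\bu)\le\|\nabla\obu\|\,\|\nabla\bu\|$, and dividing by $\|\nabla\obu\|$ (the case $\nabla\obu=\bzero$ being trivial) gives \eqref{eqn:Lem211_f3}. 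Equivalently one may test directly with $\bv^r=\bu-\obu\in\bX^r$.

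I do not expect any genuine obstacle: the argument is elementary. The only two points deserving a sentence of emphasis are (i) that in \eqref{eqn:Lem211_f2} the energy identity by itself is insufficient for a $\delta$-robust bound, so the POD inverse inequality must be brought in, and (ii) that in \eqref{eqn:Lem211_f3} the strengthened hypothesis $\bu\in\bX^r$ is exactly what licenses using $\bu$ as a test function, which is what upgrades the bound from $\sqrt{\|S^r\|_2}\,\|\bu\|$ to the gradient-stability estimate $\|\nabla\bu\|$.
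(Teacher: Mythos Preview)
Your argument is correct in all three parts. Note, however, that the paper does not actually prove this lemma: it simply recalls the estimates from \cite[Lemma~4.4]{xie2018numerical} without reproducing the proof. Your derivation---energy identity plus Cauchy--Schwarz for \eqref{eqn:Lem211_f1}, the POD inverse inequality \eqref{eq:POD_inv_ineq} for \eqref{eqn:Lem211_f2}, and the subtraction trick (equivalently, testing with $\bu-\obu$) for \eqref{eqn:Lem211_f3}---is the standard route and matches what the cited reference does, so there is nothing to compare beyond noting that you have supplied the details the paper omits.
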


\begin{remark}
    We note that the a priori stability bounds in Lemma~\ref{thm:ROM211_split1}, which were proven in \citep[Lemma 4.4]{xie2018numerical}, are different from the FE stability bounds in \citep[Lemma 2.11]{layton2008numerical}.

    Specifically, in the ROM setting, we prove two different bounds in Lemma~\ref{eqn:Lem211_f1} (see Lemma 4.4 in~\citet{xie2018numerical}): 
    (i) for $\bu \in \bX$, we prove the bound in~\eqref{eqn:Lem211_f2}, and 
    (ii) for $\bu \in \bX^r$, we prove the stronger bound in~\eqref{eqn:Lem211_f3}.

    In contrast, in the FEM setting, the stronger bound (corresponding to \eqref{eqn:Lem211_f3}) is proven in inequality (2.22) in~\citet{layton2008numerical} for $\bu \in \bX$.
    We note that, to prove inequality (2.22) in~\citet{layton2008numerical}, the authors use the Ritz projection (William Layton, personal communication).
    Since the approximation properties of the Ritz projection in a POD setting are not clear~\citep{iliescu2014are,koc2021optimal}, extending the strategy in~\citet{layton2008numerical} to the ROM setting and proving the bound in~\eqref{eqn:Lem211_f3} for $\bu \in \bX$ is an open problem.
    \label{remark:filter-stability-bounds}
\end{remark}

We note that \eqref{eqn:Lem211_f1} suggests to define an auxiliary operator $\widetilde{F}^r: \bX \to \bX^r$ such that $\widetilde{F}^r := F^r|_{\bX}$. With this notation, \eqref{eqn:Lem211_f1} implies that $\widetilde{F}^r$ is a bounded operator on $\bX$, while no similar conclusion can be stated in general for $F^r$ on $L^2(\Omega; \mathbb{R}^d) \supset \bX$.

\subsection{ROM Approximate Deconvolution}\label{sec:rom_ad}

To construct the approximate deconvolution Leray ROM, we need to define a ROM approximate deconvolution (AD) operator.
The main purpose of the AD operator is to take an overly smoothed solution component and produce a more accurate approximation of the solution.
The AD idea, which has been extensively used in image processing and inverse problems, has been extended to ROM closures in~\citet{xie2017approximate} and to ROM stabilizations in~\citet{sanfilippo2023approximate}.
The AD operator that we use in this paper is 
the van Cittert operator, 
which was also utilized in \citet{layton2008numerical} at a finite element level. Alternatives could be classical methods, such as Tikhonov or Lavrentiev regularization, which have been successfully employed in image processing and large eddy simulation 
\citep{bertero1998introduction,layton2012approximate}, 
or more recent regularization approaches 
\citep{Benning2018inverse}.

\begin{definition}[Continuous van Cittert operator]
The continuous van Cittert operator $D_N: C^0(\Omega; \ \mathbb{R}^d) \to C^0(\Omega; \ \mathbb{R}^d)$ 
is defined as the map
\begin{equation}
\label{EQ:continuous_Cittert}
    \bphi \mapsto D_N \bphi := \sum_{n=0}^N (I - F)^n \bphi.
\end{equation}
\end{definition}
\begin{definition}[ROM van Cittert operator]
The ROM van Cittert operator $D_N^r: L^2(\Omega; \ \mathbb{R}^d) \to \bX^r$ is defined as the map
\begin{equation}
\label{EQ:Discrete_Cittert}
    \bphi \mapsto D_N^r \bphi := \sum_{n=0}^N (I - F^r)^n \bphi.
\end{equation}
\end{definition}

We now extend several lemmas from the FOM finite element setting of \citet{layton2008numerical} to a ROM setting,  and discuss other relevant ROM lemmas. The first lemma we extend is \citep[Lemma 2.11]{layton2008numerical}: 

\begin{lemma}\label{thm:ROM211_split2}
Let $\bu \in \bX$. Then
\begin{align}
    \left\|D_N^r \obu\right\| & \leq C(N) \|\bu\| \label{eqn:Lem211_s1}\\
    \left\|\nabla D_N^r \obu\right\| & \leq C(N) \sqrt{\|S^{r} \|_2} \ \|\bu\|. \label{eqn:Lem211_s2}
\end{align}
Furthermore, if $\bu \in \bX^r$, then
\begin{align}
    \left\|\nabla D_N^r \obu\right\| & \leq C(N) \|\nabla \bu\|. \label{eqn:Lem211_s3}
\end{align}
\end{lemma}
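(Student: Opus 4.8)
The plan is to reduce all three bounds to a single non-expansiveness property of the operator $I - F^r$ on the reduced space $\bX^r$. First I would observe that, since $F^r$ maps $L^2(\Omega;\mathbb{R}^d)$ into $\bX^r$, the filtered field $\obu = F^r\bu$ belongs to $\bX^r$, and $I - F^r$ maps $\bX^r$ into itself; hence every summand $(I-F^r)^n\obu$ in the van Cittert expansion $D_N^r\obu = \sum_{n=0}^N (I-F^r)^n\obu$ lies in $\bX^r$. Applying the triangle inequality to this finite sum (and to its gradient), it suffices to bound $\|(I-F^r)^n\obu\|$ and $\|\nabla(I-F^r)^n\obu\|$ uniformly in $n$, which will yield the constant $C(N) = N+1$.

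The key step is the claim that, for every $\bw \in \bX^r$, one has $\|(I-F^r)\bw\| \le \|\bw\|$ and $\|\nabla(I-F^r)\bw\| \le \|\nabla\bw\|$. Writing $\obw = F^r\bw \in \bX^r$ so that $(I-F^r)\bw = \bw - \obw$, the defining identity~\eqref{EQ:discrete_filter} rearranges to $(\bw - \obw,\bv^r) = \delta^2(\nabla\obw,\nabla\bv^r)$ for all $\bv^r \in \bX^r$. Taking $\bv^r = \obw$ gives $(\bw-\obw,\obw) = \delta^2\|\nabla\obw\|^2 \ge 0$, so that $\|\bw-\obw\|^2 = (\bw-\obw,\bw) - (\bw-\obw,\obw) \le \|\bw-\obw\|\,\|\bw\|$, which is the $L^2$ bound. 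Taking instead $\bv^r = \bw - \obw$ gives $\delta^2(\nabla\obw,\nabla(\bw-\obw)) = \|\bw-\obw\|^2 \ge 0$; expanding $\|\nabla(\bw-\obw)\|^2 = \|\nabla\bw\|^2 - 2(\nabla\bw,\nabla\obw) + \|\nabla\obw\|^2$ and using $(\nabla\bw,\nabla\obw) = (\nabla(\bw-\obw),\nabla\obw) + \|\nabla\obw\|^2 = \tfrac{1}{\delta^2}\|\bw-\obw\|^2 + \|\nabla\obw\|^2$ then yields $\|\nabla(\bw-\obw)\|^2 = \|\nabla\bw\|^2 - \|\nabla\obw\|^2 - \tfrac{2}{\delta^2}\|\bw-\obw\|^2 \le \|\nabla\bw\|^2$. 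Since each application of $I - F^r$ keeps the argument in $\bX^r$, iterating gives $\|(I-F^r)^n\obu\| \le \|\obu\|$ and $\|\nabla(I-F^r)^n\obu\| \le \|\nabla\obu\|$ for all $n \ge 0$.

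The three conclusions then follow by combining the triangle inequality with the filter stability estimates already in the excerpt. For~\eqref{eqn:Lem211_s1}: $\|D_N^r\obu\| \le (N+1)\|\obu\| \le (N+1)\|\bu\|$ by~\eqref{eqn:Lem211_f1}. For~\eqref{eqn:Lem211_s2}: $\|\nabla(I-F^r)^n\obu\| \le \|\nabla\obu\| \le \sqrt{\|S^{r}\|_2}\,\|\bu\|$ by the gradient non-expansiveness and~\eqref{eqn:Lem211_f2} (or, equivalently, by the POD inverse inequality~\eqref{eq:POD_inv_ineq} applied to $(I-F^r)^n\obu \in \bX^r$ followed by the $L^2$ non-expansiveness and~\eqref{eqn:Lem211_f1}), hence $\|\nabla D_N^r\obu\| \le (N+1)\sqrt{\|S^{r}\|_2}\,\|\bu\|$. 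For~\eqref{eqn:Lem211_s3}, when $\bu \in \bX^r$: $\|\nabla(I-F^r)^n\obu\| \le \|\nabla\obu\| \le \|\nabla\bu\|$ by~\eqref{eqn:Lem211_f3}, so $\|\nabla D_N^r\obu\| \le (N+1)\|\nabla\bu\|$. In every case $C(N) = N+1$ suffices.

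I expect the only real obstacle to be the key step, i.e., showing that $I - F^r$, and therefore each of its powers, does not expand either the $L^2$ norm or the $H^1$ seminorm on $\bX^r$; everything else is bookkeeping with previously established lemmas. An alternative, slightly heavier route to the key step is spectral: $F^r$ restricted to $\bX^r$ is self-adjoint and positive for both the $L^2$ and the $H^1$ inner products, with spectrum contained in $(0,1]$, so $I - F^r$ has spectrum in $[0,1)$, and the non-expansiveness in both norms follows from the fact that the eigenfunctions of the ROM Laplacian are orthogonal in both inner products; however, the energy argument above is shorter and avoids introducing that eigenbasis.
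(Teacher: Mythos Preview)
Your proof is correct and takes a genuinely different route from the paper. The paper argues \eqref{eqn:Lem211_s1} only abstractly (finite compositions and sums of bounded operators are bounded), derives \eqref{eqn:Lem211_s2} from \eqref{eqn:Lem211_s1} via the POD inverse inequality exactly as you do, and proves \eqref{eqn:Lem211_s3} by induction on $N$ using the recurrence $\widetilde{D}_N^r = I + (I - \widetilde{F}^r)\widetilde{D}_{N-1}^r$ together with the filter bound \eqref{eqn:Lem211_f3}; that induction gives $C(N) = 1 + 2\,C(N-1)$, i.e., $C(N) = 2^{N+1} - 1$. You instead establish once that $I - F^r$ is non-expansive on $\bX^r$ in both the $L^2$ norm and the $H^1$ seminorm, then bound each summand $(I-F^r)^n\obu$ of the van Cittert series individually. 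This yields the sharper constant $C(N) = N+1$ uniformly in all three estimates and avoids the exponential growth in $N$ built into the paper's induction. The paper's route is slightly shorter because it never revisits the filter equation \eqref{EQ:discrete_filter} and relies only on the already-stated stability bounds \eqref{eqn:Lem211_f1}--\eqref{eqn:Lem211_f3}; your energy argument for the non-expansiveness of $I-F^r$ costs only a few extra lines, and the payoff in the constant is real.
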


Similarly to Lemma \ref{thm:ROM211_split1}, \eqref{eqn:Lem211_s1} implies that the composition $\widetilde{D}_N^r \circ \widetilde{F}^r$ is a bounded operator on $\bX$, where $\widetilde{D}_N^r := D_N^r|_{\bX^r}$ is the restriction of $D_N^r$ to $\bX^r$.

\begin{proof}
Since the uniform in $N$ boundedness of $\widetilde{F}^r$ is guaranteed by \eqref{eqn:Lem211_f1}, it is clear that to prove \eqref{eqn:Lem211_s1} we only need to prove that $\widetilde{D}_N^r$ is a bounded operator from $\bX^r$ to itself. 
When $\bphi \in \bX^r$, $F^r$ in \eqref{EQ:Discrete_Cittert} can be replaced by $\widetilde{F}^r$. Upon such replacement, the right-hand side of \eqref{EQ:Discrete_Cittert} contains linear combinations and compositions of the continuous operator $I - \widetilde{F}^r$, 
implying that $\widetilde{D}_N^r$ 
is continuous as well, with a continuity constant that depends on $N$.

To prove \eqref{eqn:Lem211_s2}, we use the POD inverse inequality \eqref{eq:POD_inv_ineq} and the bound in~\eqref{eqn:Lem211_s1}:
\begin{equation}
    \|\nabla D_N^r \obu\| \stackrel{\eqref{eq:POD_inv_ineq}}{\leq} \sqrt{\|S^{r} \|_2} \ \| D_N^r \obu\| \stackrel{\eqref{eqn:Lem211_s1}}{\leq} C(N) \sqrt{\|S^{r} \|_2} \ \|\bu\|.
\end{equation}

Upon recognizing that $D_N^r$ can be replaced by $\widetilde{D}_N^r$ in \eqref{eqn:Lem211_s3}, we finally prove \eqref{eqn:Lem211_s3} by induction. 
The base case $N = 0$ follows from $\eqref{eqn:Lem211_f3}$, and results in $C(0) = 1$. 
For the induction step, assume that \eqref{eqn:Lem211_s3} holds for $N - 1$. 
We first rewrite $\widetilde{D}_N^r$ as
\begin{align}
    \widetilde{D}_N^r \bphi &= \sum_{n=0}^N (I - \widetilde{F}^r)^n \bphi
    = \bphi + \sum_{n=1}^N (I - \widetilde{F}^r)^n \bphi\notag\\
    &= \bphi + (I - \widetilde{F}^r) \sum_{n=0}^{N - 1} (I - \widetilde{F}^r)^n \bphi
    = (I + (I - \widetilde{F}^r) \widetilde{D}_{N-1}^r) \bphi\label{eq:VC_induc}
\end{align}
to obtain, in particular, 
\begin{equation*}
    \widetilde{D}_N^r \obu = \obu + \widetilde{D}_{N-1}^r \obu - \overlineIMPLEMENTATION{\widetilde{D}_{N-1}^r \obu}, 
\end{equation*}
and thus
\begin{align*}
    \|\nabla \widetilde{D}_N^r \obu\| 
    &\stackrel{\substack{\text{triangle}\\\text{inequality}}}{\leq}
    \|\nabla \obu\|
    + \|\nabla \widetilde{D}_{N-1}^r \obu \|
    + \|\nabla \overlineIMPLEMENTATION{\widetilde{D}_{N-1}^r \obu} \|\\
    &\stackrel{\eqref{eqn:Lem211_f3}}{\leq}
    \|\nabla \bu\|
    + 2 \|\nabla \widetilde{D}_{N-1}^r \obu \|
    \stackrel{\substack{\eqref{eqn:Lem211_s3} \text{ for } N - 1}}{\leq}
    (1 + 2 C(N - 1))\ \|\nabla \bu \|,
\end{align*}
which concludes the proof.
\end{proof}

Using Lemma \ref{thm:Xie_Lemma212}, we can also extend \citep[Lemma 2.13]{layton2008numerical} as follows:
\begin{lemma}
    \label{lemma:Layton_213_Rom}
Let $\bu_k = \bu(\bx,t_k) \in \bX$ be a solution to~\eqref{eq:NSE_weak} with $\Delta \bu_k\in L^2(\Omega).$ 
    If Assumption \ref{assump:POD_approximation} holds, then, for all $k$

\begin{align*}
     \| \bu_k - D_N^r \obuk{k} \|^2 &\leq C(N) \left(h^{2m+4} + \Delta t^4 + \sum_{j=r+1}^R \lambda_j\right) \\
    &+ C(N) \delta^2 \left( h^{2m+2} + \| S^{r} \|_2 \ h^{2m+4} +  (1 + \|S^{r} \|_2) \Delta t^4 + \sum_{j=r+1}^R \| \nabla\bvarphi_j \|^2 \lambda_j \right)\\
  &+ C(N) \delta^4 \| \Delta \bu_k \|^2.
\end{align*}
\end{lemma}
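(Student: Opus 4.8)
The plan is to reduce the deconvolution error $\|\bu_k - D_N^r \obuk{k}\|$ to the filtering error $\|\bu_k - \obuk{k}\|$, which is already controlled by Lemma~\ref{thm:Xie_Lemma212}, and then to absorb the reduction factor into the $N$-dependent constant. The starting point is the van Cittert telescoping identity: since $\obuk{k} = F^r \bu_k$ and $F^r$ is linear (the variational problem~\eqref{EQ:discrete_filter} defining it is linear), one has
\[
D_N^r \obuk{k} \;=\; \sum_{n=0}^N (I - F^r)^n F^r \bu_k \;=\; \sum_{n=0}^N \Bigl[ (I - F^r)^n \bu_k - (I - F^r)^{n+1} \bu_k \Bigr] \;=\; \bu_k - (I - F^r)^{N+1} \bu_k,
\]
so that $\bu_k - D_N^r \obuk{k} = (I - F^r)^{N+1} \bu_k = (I - F^r)^N (\bu_k - \obuk{k})$, using $(I - F^r)\bu_k = \bu_k - \obuk{k}$ in the last step. (Equivalently, the same identity follows by induction on $N$ from~\eqref{eq:VC_induc}, giving the one-step recursion $\bu_k - D_N^r \obuk{k} = (I - F^r)\bigl(\bu_k - D_{N-1}^r \obuk{k}\bigr)$ with base case $\bu_k - D_0^r \obuk{k} = \bu_k - \obuk{k}$.)

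Next I would iterate the filter stability bound~\eqref{eqn:Lem211_f1} of Lemma~\ref{thm:ROM211_split1}. The key bookkeeping point is that every iterate $(I - F^r)^j (\bu_k - \obuk{k}) = \bu_k - D_j^r \obuk{k}$ belongs to $\bX$: indeed $\bu_k \in \bX$ by hypothesis, and $D_j^r \obuk{k} \in \bX^r \subset \bX^h \subset \bX$ since both $F^r$ and $I - F^r$ map $\bX^r$ into itself. Consequently, at every step we may invoke $\|F^r \bv\| \le \|\bv\|$ for $\bv \in \bX$, which gives $\|(I - F^r)\bv\| \le \|\bv\| + \|F^r \bv\| \le 2\|\bv\|$, and hence, after $N$ applications,
\[
\bigl\|\bu_k - D_N^r \obuk{k}\bigr\| \;=\; \bigl\|(I - F^r)^N (\bu_k - \obuk{k})\bigr\| \;\le\; 2^N \, \bigl\|\bu_k - \obuk{k}\bigr\| .
\]

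Finally I would square this inequality and apply Lemma~\ref{thm:Xie_Lemma212} --- which is available since $\bu_k \in \bX$ solves~\eqref{eq:NSE_weak}, $\Delta \bu_k \in L^2(\Omega)$, and Assumption~\ref{assump:POD_approximation} holds --- to estimate $\|\bu_k - \obuk{k}\|^2$ by precisely the right-hand side of the claimed bound; this yields the assertion with $C(N) = 4^N C$. The argument is the ROM counterpart of the finite element proof of \citep[Lemma 2.13]{layton2008numerical}, with Lemmas~\ref{thm:ROM211_split1} and~\ref{thm:Xie_Lemma212} playing the roles of the finite element filter-stability and filter-error estimates. I do not expect a deep difficulty here; the only points that need care are (i) justifying the telescoping identity at the operator level (linearity of $F^r$), and (ii) the domain argument ensuring each iterate remains in $\bX$, so that the $\bX$-version~\eqref{eqn:Lem211_f1} of the stability bound can be used at every step rather than the weaker $L^2$-version~\eqref{eqn:Lem211_f2}, whose $\sqrt{\|S^r\|_2}$ factor would otherwise accumulate and spoil the estimate.
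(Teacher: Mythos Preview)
Your proposal is correct and follows essentially the same route as the paper: both derive the identity $\bu_k - D_N^r \obuk{k} = (I - F^r)^N(\bu_k - \obuk{k})$ (the paper via the recurrence~\eqref{eq:recurrence_error_Layton_213_Rom}, you via telescoping), then bound $\|(I - F^r)^N\|$ on $\bX$ using~\eqref{eqn:Lem211_f1} and the triangle inequality, and conclude with Lemma~\ref{thm:Xie_Lemma212}. Your explicit constant $C(N) = 4^N C$ and the careful domain bookkeeping (each iterate lies in $\bX$) are slight refinements over the paper's generic operator-norm argument; the only imprecision is your closing reference to~\eqref{eqn:Lem211_f2} as an ``$L^2$-version'' --- that inequality is a gradient bound, not an alternative $L^2$ stability estimate --- but this does not affect the proof.
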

\begin{proof}
First, we recall that from \eqref{eq:VC_induc} the computation of $D_N^r \obu$ can be equivalently carried out by means of the following iterative process
\begin{equation}
\begin{cases}
\bu^{(0)} = \obu := \widetilde{F}^r \bu,&\\
\bu^{(n + 1)} = \widetilde{F}^r \bu + (I - \widetilde{F}^r) \bu^{(n)}, & n = 0, \hdots, N - 1.
\end{cases}
\label{eq:recurrence_Layton_213_Rom}
\end{equation}
We wish now to obtain a recurrence formula for the AD error
\begin{equation*}
\be^{(N)} := D_N^r \obu - \bu = \bu^{(N)} - \bu.
\end{equation*}
To do so, replace $\widetilde{F}^r \bu$ in \eqref{eq:recurrence_Layton_213_Rom} by exploiting the identity
\begin{equation*}
 \widetilde{F}^r \bu = \bu - (I - \widetilde{F}^r) \bu
 \end{equation*}
to 
obtain
\begin{equation}
\begin{cases}
\be^{(0)} = \obu - \bu = - (I - \widetilde{F}^r) \bu,&\\
\be^{(n + 1)} = (I - \widetilde{F}^r) \be^{(n)} 
, & n = 0, \hdots, N - 1,
\end{cases}
\label{eq:recurrence_error_Layton_213_Rom}
\end{equation}
and hence, in particular,
\begin{equation}
\be^{(N)} = \left(I - \widetilde{F}^r\right)^{N} (\obu - \bu).
\end{equation}

We now consider $\left(I - \widetilde{F}^r\right)^N$ as an operator on $\bX$, and show that
\[ \left\| \left(I - \widetilde{F}^r\right)^N \right\| \leq \left\|I - F^r \right\|^N \leq \left(\|I \| + \| F^r \| \right)^N \leq C(N),
\]
by standard properties of operator norms and the triangle inequality, 
where the final inequality is due to the fact that $\widetilde{A}_r$ is a bounded operator on $\bX$, as shown by \eqref{eqn:Lem211_f1}.
Using this, we obtain
\[
\| D_N^r \obu - \bu \| \leq C(N) \|\obu - \bu \|,
\]
and conclude by bounding the right-hand side with Lemma \ref{thm:Xie_Lemma212}.

\end{proof}

\subsection{Approximate Deconvolution Leray ROM (ADL-ROM)}\label{sec:rom_adl}

One of the most popular regularized ROMs is the {\emph Leray ROM (L-ROM)}~\citep{Girfoglio2019CF,girfoglio2021pod,kaneko2020towards,sabetghadam2012alpha,wells2017evolve}, which 
is based on Jean Leray's 
simple but powerful idea 
to replace one part of the non-linear term in the Navier-Stokes equations, $\bu \cdot \nabla \bu$, with a filtered term with higher regularity, 
i.e., $\obu \cdot \nabla \bu$ \citep{leray1934sur}. 
Although the Leray ROM has been successfully used in the numerical simulation of convection-dominated flows, one of its drawbacks is that it can be overdiffusive (e.g., when the fiter radius is too large; see, e.g.,~\citet{sanfilippo2023approximate}).
To address this drawback of the L-ROM, in~\citet{sanfilippo2023approximate} we  proposed AD as a means to increase the L-ROM accuracy without compromising its numerical stability.
Using the same time stepping scheme as that used in the FOM (i.e., BDF2), the new {\emph approximate deconvolution Leray ROM (ADL-ROM)}~\citep{sanfilippo2023approximate} can be summarized in the following algorithm:

\begin{algorithmplain}[BDF2 ADL-ROM Scheme]\label{alg:BDF2_scheme}
    For $\Delta t > 0$ and for $k = 1, \dots, K-1$, given $\bu_{k-1}^r,\bu_{k}^r \in \bX^r$, find $\bu^r_{k+1} \in \bX^r$ such that
    \begin{align}\label{eqn:BDF2_eq}
        \frac{\left(3\bu_{k+1}^r - 4 \bu_{k}^r + \bu_{k-1}^r, \bv^r\right)}{2\Delta t}
        + \nu \left(\nabla \bu_{k+1}^r,\nabla \bv^r\right)
        + b^*\left(D_N^r\oburk{k+1}, \bu_{k+1}^r, \bv^r\right) \nonumber  \\
        = \left(\bff_{k+1}, \bv^r\right) \;\; \forall \bv^r \in \bX^r.
    \end{align}
\end{algorithmplain}

\section{ADL-ROM Numerical Analysis}
    \label{sec:adl-rom-numerical-analysis}

The following lemma on the {\emph{a priori}} bound, or stability estimate, for the approximation scheme in (\ref{eqn:BDF2_eq}) extends \citep[Lemma 4.1]{mohebujjaman2017energy} to the BDF2 ADL-ROM scheme in Algorithm \ref{alg:BDF2_scheme}.

\begin{lemma}\label{lemma:a_priori_bound_BDF2}
    The approximation scheme in Algorithm \ref{alg:BDF2_scheme} has a solution $\bu_k^r$ at each timestep $k = 2, \dots, K$ which satisfies the following \emph{a priori} bound:
    \begin{align}
        \label{eqn:a_priori_BDF2}
        \left\|\bu^{r}_K\right\|^{2}+2 \nu \Delta t \sum_{\kappa=1}^{K-1}\left\|\nabla \bu^{r}_{\kappa+1}\right\|^{2} \leq\left\|\bu^{r}_{1}\right\|^{2}+\left\|2 \bu^{r}_{1}-\bu^{r}_{0}\right\|^{2}+2 \nu^{-1} \Delta t \sum_{\kappa=1}^{K-1}\left\|\bff_{\kappa+1}\right\|^{2} \nonumber \\
        =C\left(\bu^{r}_{0}, \bu^{r}_{1}, \bff, \nu^{-1}\right).
    \end{align}
\end{lemma}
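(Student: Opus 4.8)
The plan is to establish the \emph{a priori} bound by the standard energy argument for BDF2 schemes: test the discrete equation \eqref{eqn:BDF2_eq} with $\bv^r = \bu^r_{k+1}$, exploit the skew-symmetry of $b^*$ to kill the nonlinear term, handle the BDF2 time-difference term with the algebraic $G$-identity, and sum over $k$ to telescope. Existence of a solution at each step then follows from the \emph{a priori} bound by a fixed-point/degree argument applied to the finite-dimensional nonlinear system.

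First I would set $\bv^r = \bu^r_{k+1}$ in \eqref{eqn:BDF2_eq}. The crucial observation is that the nonlinear term vanishes: since $b^*(\bw, \bu^r_{k+1}, \bu^r_{k+1}) = 0$ for any $\bw$ by the first identity in Lemma~\ref{Lemma:skew_Symmetric}, applied with $\bw = D_N^r \overlineIMPLEMENTATION{\bu^r_{k+1}}$, the AD-Leray modification of the convective term is invisible to the energy estimate — this is precisely why the scheme inherits the stability of the plain G-ROM. The viscous term gives $\nu \|\nabla \bu^r_{k+1}\|^2 \geq 0$. For the time-derivative term, I would invoke the well-known BDF2 polarization identity
\begin{equation*}
\left(3\bu^r_{k+1} - 4\bu^r_k + \bu^r_{k-1}, \bu^r_{k+1}\right)
= \tfrac12\Bigl(\|\bu^r_{k+1}\|^2 - \|\bu^r_k\|^2 + \|2\bu^r_{k+1}-\bu^r_k\|^2 - \|2\bu^r_k - \bu^r_{k-1}\|^2 + \|\bu^r_{k+1} - 2\bu^r_k + \bu^r_{k-1}\|^2\Bigr),
\end{equation*}
whose last term is nonnegative and can simply be dropped. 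For the forcing term I would use Cauchy--Schwarz and Young's inequality, $(\bff_{k+1}, \bu^r_{k+1}) \leq \tfrac{1}{2\nu}\|\bff_{k+1}\|^2 + \tfrac{\nu}{2}\|\bu^r_{k+1}\|^2$; but since ROM velocities need not satisfy a Poincaré-type bound uniformly, I would instead absorb using the viscous term directly, i.e., pair $\|\bff_{k+1}\|$ against $\|\nabla\bu^r_{k+1}\|$ via Poincaré, or — matching the target inequality exactly — keep $2\nu^{-1}\Delta t\|\bff_{k+1}\|^2$ on the right and $2\nu\Delta t\|\nabla\bu^r_{k+1}\|^2$ on the left after multiplying through by $4\Delta t$ and noting Poincaré is not even needed if one bounds $(\bff_{k+1},\bu^r_{k+1})$ by $\nu^{-1}\|\bff_{k+1}\|^2 + \tfrac{\nu}{4}\|\nabla\bu^r_{k+1}\|^2$ using Poincaré's inequality $\|\bu^r_{k+1}\| \leq C_P\|\nabla\bu^r_{k+1}\|$ (absorbing $C_P$ into the forcing constant). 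Multiplying by $4\Delta t$, summing over $\kappa = 1, \dots, K-1$, and telescoping the $G$-quantity $\|\bu^r_{\kappa+1}\|^2 + \|2\bu^r_{\kappa+1}-\bu^r_\kappa\|^2$ leaves exactly the stated bound with the initial data $\|\bu^r_1\|^2 + \|2\bu^r_1 - \bu^r_0\|^2$ on the right.

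For existence, since \eqref{eqn:BDF2_eq} is a finite-dimensional nonlinear system for the coefficient vector of $\bu^r_{k+1}$, I would apply the standard corollary of Brouwer's fixed-point theorem (as in \citet[Lemma 1.4, Chapter IV]{temam2001navier} or \citet{layton2008introduction}): define the continuous map $\Phi$ on $\bX^r$ whose zeros are the solutions, and observe that the energy computation above shows $(\Phi(\bv^r), \bv^r) > 0$ for $\|\bv^r\|$ large enough (the nonlinear term drops out, the viscous term is coercive, and the forcing is controlled), which guarantees a zero inside the ball. Uniqueness is not claimed and is not needed here.

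The main obstacle — though it is a mild one — is bookkeeping the BDF2 polarization identity and the accompanying $G$-norm telescoping cleanly, together with making sure the forcing term is absorbed in a way that produces exactly the constants $\nu^{-1}$ and $2\nu$ appearing in \eqref{eqn:a_priori_BDF2} rather than merely comparable ones; this requires a slightly careful Young's inequality (and a silent use of Poincaré on the ROM space, legitimate since $\bX^r \subset \bX = H^1_0$). Everything else is routine, and in particular no regularity or POD-approximation assumptions are invoked — the bound is purely an algebraic stability statement about the scheme, which is why it extends \citet[Lemma 4.1]{mohebujjaman2017energy} verbatim up to the harmless replacement of the convective argument $\bu^r_{k+1}$ by $D_N^r\overlineIMPLEMENTATION{\bu^r_{k+1}}$.
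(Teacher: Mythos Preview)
Your proposal is correct and follows essentially the same approach as the paper: test \eqref{eqn:BDF2_eq} with $\bv^r=\bu^r_{k+1}$, kill the nonlinear term via Lemma~\ref{Lemma:skew_Symmetric}, apply the BDF2 polarization identity~\eqref{eqn:algid}, estimate the forcing by Cauchy--Schwarz/Young (with Poincar\'e, which holds uniformly since $\bX^r\subset H^1_0$), and telescope. You actually supply more detail than the paper---in particular the Brouwer/fixed-point existence argument, which the paper only asserts by reference to \citep[Lemma 4.1]{mohebujjaman2017energy}---and your one momentary slip (suggesting the Poincar\'e constant on $\bX^r$ might be $r$-dependent) you correctly retract in the same paragraph.
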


\begin{proof}
    The proof is 
    similar to the one of \citep[Lemma 4.1]{mohebujjaman2017energy}. It entails choosing $\bv^r=\bu^r_{k+1}$ in \eqref{eqn:BDF2_eq} 
    and noting that $b^*(D_N^r \oburk{k+1}, \bu^r_{k+1}, \bu^r_{k+1}) = 0$ by Lemma \ref{Lemma:skew_Symmetric}. Thus, we obtain
    \begin{equation}
        \frac{1}{2}\left( 3\bu_{k+1}^{r} - 4 \bu_{k}^{r} + \bu_{k-1}^{r}, \bu_{k+1}^{r} \right) + \nu \Delta t \left( \nabla \bu_{k+1}^r, \nabla \bu_{k+1}^r \right) = \Delta t \left( \bff_{k+1}, \bu_{k+1}^r \right).
    \end{equation}
    The proof follows by using the Cauchy-Schwarz and Young inequalities, as well as the algebraic identity used in the proof of \citep[Lemma 4.1]{mohebujjaman2017energy}, i.e.,
    \begin{equation}\label{eqn:algid}
        \frac{1}{2}(3 a-4 b+c) a=\frac{1}{4}\left[a^{2}+(2 a-b)^{2}\right]-\frac{1}{4}\left[b^{2}+(2 b-c)^{2}\right]+\frac{1}{4}(a-2 b+c)^{2}.
    \end{equation}
\end{proof}

The following lemma is critical in ensuring the optimality of the error bound in Theorem~\ref{thm:32}.
\begin{lemma}
    For any $k = 1, \ldots, K-1$, the following inequality holds: 
    \begin{eqnarray}
        && \Delta t \sum_{k=1}^{K-1}\left\|\bu^{r}_{k+1}\right\|\left\|\nabla \bu^{r}_{k+1} \right\|\left\|\nabla \left( \bu_{k+1} - \bP^r(\bu_{k+1}) \right) \right\|^{2}
        \nonumber \\
        && \hspace*{1.0cm} 
        \leq 
        C \biggl( h^{2m+2} + h^{2m+4} \left\| S^{r} \right\|_{2} 
        + \Delta t^4 \left( 1+\left\|S^{r} \right\|_{2} \right) 
        + \sum_{j=r+1}^{R}\left\|\nabla\bvarphi_{j}\right\|^{2} \lambda_{j} \biggr),
        \label{eqn:lemma-assumption-u-gradu-gradeta-1}
    \end{eqnarray}
    where $\bP^r$ is the ROM projection in Definition~\ref{definition:rom-projection}.
    \label{lemma:assumption-u-gradu-gradeta}
\end{lemma}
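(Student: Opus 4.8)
The plan is to bound the left-hand side by pulling the ROM factors $\|\bu^r_{k+1}\|$ and $\|\nabla\bu^r_{k+1}\|$ out of the sum in an $\ell^\infty$ fashion, and then recognize what remains as a time-averaged ROM-projection gradient error to which Assumption~\ref{assump:POD_approximation} applies. Concretely, I would first write
\[
\Delta t \sum_{k=1}^{K-1}\|\bu^r_{k+1}\|\,\|\nabla\bu^r_{k+1}\|\,\|\nabla(\bu_{k+1}-\bP^r(\bu_{k+1}))\|^2
\le \Bigl(\max_{2\le k\le K}\|\bu^r_k\|\Bigr)\Bigl(\max_{2\le k\le K}\|\nabla\bu^r_k\|\Bigr)\,\Delta t \sum_{k=1}^{K-1}\|\nabla(\bu_{k+1}-\bP^r(\bu_{k+1}))\|^2.
\]
The two maxima must be controlled by the \emph{a priori} stability bound of Lemma~\ref{lemma:a_priori_bound_BDF2}: that lemma directly gives $\|\bu^r_K\|^2\le C(\bu^r_0,\bu^r_1,\bff,\nu^{-1})$ (and the same argument applied up to any intermediate step $k$ bounds $\|\bu^r_k\|$ uniformly in $k$), so $\max_k\|\bu^r_k\|\le C$. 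For the gradient factor, the same lemma controls $2\nu\,\Delta t\sum_k\|\nabla\bu^r_{k+1}\|^2\le C$, which bounds the \emph{sum} of squared gradients but not the pointwise maximum; to get $\max_k\|\nabla\bu^r_k\|$ I would instead invoke the POD inverse inequality~\eqref{eq:POD_inv_ineq}, $\|\nabla\bu^r_k\|\le\sqrt{\|S^r\|_2}\,\|\bu^r_k\|\le\sqrt{\|S^r\|_2}\,C$.

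This is exactly the subtle point flagged in Remark~\ref{remark:assumption-dq}: using the time-averaged bound~\eqref{eq:L2_ROM_projection_error_2} together with a pointwise bound on $\|\nabla\bu^r_{k+1}\|$ obtained via the inverse inequality would introduce an extra factor of $\|S^r\|_2$ and hence a suboptimal dependence on the POD quantities. So instead I would use the pointwise-in-time Assumption~\ref{assump:POD_approximation}, namely~\eqref{eq:POD_approximation_2}, to bound $\|\nabla(\bu_{k+1}-\bP^r(\bu_{k+1}))\|^2$ termwise, so that after summing and multiplying by $\Delta t$ (and using $\Delta t(K-1)\le T$) one gets
\[
\Delta t\sum_{k=1}^{K-1}\|\nabla(\bu_{k+1}-\bP^r(\bu_{k+1}))\|^2
\le C\Bigl(h^{2m+2}+\|S^r\|_2\,h^{2m+4}+(1+\|S^r\|_2)\Delta t^4+\sum_{j=r+1}^R\|\nabla\bvarphi_j\|^2\lambda_j\Bigr).
\]
The constant $C$ here absorbs $T$ and the data-dependent constants, consistently with the paper's convention.

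Combining the three pieces, the $\sqrt{\|S^r\|_2}$ coming from the inverse-inequality bound on $\max_k\|\nabla\bu^r_k\|$ multiplies the projection-error bound. To reconcile this with the stated conclusion~\eqref{eqn:lemma-assumption-u-gradu-gradeta-1}, I expect that the intended argument avoids the inverse inequality on $\|\nabla\bu^r_{k+1}\|$ altogether: since one of the three $\|\nabla(\cdots)\|$ has a square on it, I would pair one copy of $\|\nabla(\bu_{k+1}-\bP^r(\bu_{k+1}))\|$ with $\|\nabla\bu^r_{k+1}\|$ inside a Cauchy--Schwarz-in-$k$ step, i.e.
\[
\Delta t\sum_k \|\bu^r_{k+1}\|\,\|\nabla\bu^r_{k+1}\|\,\|\nabla\bfeta_{k+1}\|^2
\le C\,\Delta t\sum_k \|\nabla\bu^r_{k+1}\|^2\,\|\nabla\bfeta_{k+1}\|
\quad\text{(after }\|\bu^r_{k+1}\|\le C\text{)},
\]
writing $\bfeta_{k+1}:=\bu_{k+1}-\bP^r(\bu_{k+1})$, then applying discrete Cauchy--Schwarz to split $\sum_k\|\nabla\bu^r_{k+1}\|^2\|\nabla\bfeta_{k+1}\|\le(\sum_k\|\nabla\bu^r_{k+1}\|^2)^{1/2}(\sum_k\|\nabla\bu^r_{k+1}\|^2\|\nabla\bfeta_{k+1}\|^2)^{1/2}$ — but that reintroduces a square on $\|\nabla\bfeta\|$, so the cleanest route is really: bound $\|\bu^r_{k+1}\|\le C$ and $\|\nabla\bu^r_{k+1}\|^2$ summed against $\|\nabla\bfeta_{k+1}\|^2$ using $\|\nabla\bfeta_{k+1}\|^2\le C(\cdots)$ pointwise (Assumption~\ref{assump:POD_approximation}), pulling the pointwise bound out of the $k$-sum and leaving $\Delta t\sum_k\|\nabla\bu^r_{k+1}\|^2\le C$ from Lemma~\ref{lemma:a_priori_bound_BDF2}. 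This last version gives precisely the right-hand side of~\eqref{eqn:lemma-assumption-u-gradu-gradeta-1} with no stray $\|S^r\|_2$, and it is the approach I would commit to.

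The main obstacle is therefore not the algebra but getting the \emph{bookkeeping of the two ROM factors} $\|\bu^r_{k+1}\|$ and $\|\nabla\bu^r_{k+1}\|$ right so as not to lose optimality: the $L^\infty$-in-time bound on $\|\bu^r_{k+1}\|$ comes for free from stability, but $\|\nabla\bu^r_{k+1}\|^2$ must be kept \emph{inside} the time sum and absorbed by the $\nu\,\Delta t\sum\|\nabla\bu^r_{k+1}\|^2$ term of Lemma~\ref{lemma:a_priori_bound_BDF2} rather than being crudely bounded pointwise via the inverse inequality, and the projection error must be taken pointwise (Assumption~\ref{assump:POD_approximation}, not Lemma~\ref{lemma:L2_ROM_projection_error}) so that it can be pulled outside the sum. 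Handling the $\nu^{-1}$ dependence hidden in the constant $C$ and confirming the solution $\bu^r_k$ exists (already granted by Lemma~\ref{lemma:a_priori_bound_BDF2}) are routine once this structure is in place.
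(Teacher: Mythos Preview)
Your final committed approach is the paper's approach: bound $\|\bu^r_{k+1}\|\le C$ pointwise via the stability Lemma~\ref{lemma:a_priori_bound_BDF2}, pull the projection error $\|\nabla\bfeta_{k+1}\|^2$ out of the sum using the pointwise Assumption~\ref{assump:POD_approximation}, and absorb what remains by the $\Delta t\sum_k\|\nabla\bu^r_{k+1}\|^2$ term of the stability bound. One small slip: after pulling out $\|\bu^r_{k+1}\|$ and $\|\nabla\bfeta_{k+1}\|^2$, the remaining factor inside the sum is $\|\nabla\bu^r_{k+1}\|$ to the \emph{first} power, not the second, so you are left with $\Delta t\sum_k\|\nabla\bu^r_{k+1}\|$, which the paper reduces to the available $\ell^2$-in-time bound via discrete Cauchy--Schwarz, $\Delta t\sum_k\|\nabla\bu^r_{k+1}\|\le (K\Delta t)^{1/2}\bigl(\Delta t\sum_k\|\nabla\bu^r_{k+1}\|^2\bigr)^{1/2}\le C$.
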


\begin{proof}
To prove~\eqref{eqn:lemma-assumption-u-gradu-gradeta-1}, we use Lemma \ref{lemma:a_priori_bound_BDF2} and Assumption \ref{assump:POD_approximation}: 
    \begin{align}\label{eqn:lemma-assumption-u-gradu-gradeta-2}
        & \Delta t \sum_{k=1}^{K-1}\left\|\bu^{r}_{k+1}\right\|\left\|\nabla \bu^{r}_{k+1}\right\|\left\|\nabla \left( \bu_{k+1} - \bP^r(\bu_{k+1}) \right) \right\|^{2}  \nonumber\\
        & \quad \stackrel{\eqref{eqn:a_priori_BDF2}}{\leq} \widetilde{C} \Delta t\sum_{k=1}^{K-1} \left\|\nabla \bu^{r}_{k+1}\right\| \left\|\nabla \left( \bu_{k+1} - \bP^r(\bu_{k+1}) \right) \right\|^{2} \nonumber\\
        & \quad \leq \widetilde{C} \max_{k=0, \hdots, K-1} \left\|\nabla \left( \bu_{k+1} - \bP^r(\bu_{k+1}) \right) \right\|^{2} \Delta t\sum_{k=1}^{K-1} \left\|\nabla \bu^{r}_{k+1}\right\| \nonumber\\
        & \quad \stackrel{(\ref{eqn:a_priori_BDF2})}{\leq} C \max_{k=0, \hdots, K-1}\left\|\nabla \left( \bu_{k+1} - \bP^r(\bu_{k+1}) \right) \right\|^{2} \nonumber \\
        & \quad \stackrel{(\ref{eq:POD_approximation_2})}{\leq} C\left(h^{2m+2} + h^{2m+4}\left\|S^{r}\right\|_{2} + \Delta t^4\left(1+\left\|S^{r}\right\|_{2}\right) + \sum_{j=r+1}^{R}\left\|\nabla\bvarphi_{j}\right\|^{2} \lambda_{j}\right).
    \end{align}
    In the first inequality in~\eqref{eqn:lemma-assumption-u-gradu-gradeta-2}, we employ the first term on the left-hand side of the stability estimate \eqref{eqn:a_priori_BDF2} in Lemma \ref{lemma:a_priori_bound_BDF2} to obtain that $\left\|\bu^{r}_{k+1}\right\|$ is bounded by a constant $\widetilde{C}$ for every $k = 
    1, \hdots, K - 1$. In the second inequality, we pull the maximum of $\left\|\nabla  \left( \bu_{k+1} - \bP^r(\bu_{k+1}) \right) \right\|^{2}$ out of the sum, in order to be able to use again \eqref{eqn:a_priori_BDF2} on $\Delta t\sum_{k=1}^{K-1} \left\|\nabla \bu^{r}_{k+1}\right\|$ next. In the third inequality, we estimate $\Delta t \sum_{k=1}^{K-1} \left\|\nabla \bu^{r}_{k+1}\right\|$ using the Cauchy-Schwarz inequality first, and then the second addend on the left-hand side of \eqref{eqn:a_priori_BDF2}, namely: 
    \begin{align*}
    \Delta t \sum_{k=1}^{K-1} & \left\|\nabla \bu^{r}_{k+1}\right\| \leq \Delta t \ \widehat{C} \left(\sum_{k=1}^{K-1} \left\|\nabla \bu^{r}_{k+1}\right\|^2\right)^{\frac{1}{2}} \ K^{\frac{1}{2}}\\
    &\leq \Delta t^{\frac{1}{2}} \ \widehat{\widehat{C}} \left(\Delta t \sum_{k=1}^{K-1} \left\|\nabla \bu^{r}_{k+1}\right\|^2\right)^{\frac{1}{2}} \ \frac{1}{\Delta t^{\frac{1}{2}}} \leq \widehat{\widehat{C}} \widetilde{C} = C.
    \end{align*}

    In the fourth and final inequality in \eqref{eqn:lemma-assumption-u-gradu-gradeta-2}, we use \eqref{eq:POD_approximation_2} in Assumption \ref{assump:POD_approximation} to conclude. 
    We 
    emphasize that we would have obtained a suboptimal estimate if we had used \eqref{eq:L2_ROM_projection_error_2} instead of \eqref{eq:POD_approximation_2} in Assumption \ref{assump:POD_approximation}. 
    Indeed, multiplying both sides of \eqref{eq:L2_ROM_projection_error_2} by $K + 1$
    \begin{align*}
        & \|\nabla( \bu_k - \bP^r(\bu_k) )\| ^2 \\
        &\leq \sum_{k=1}^K \|\nabla( \bu_k - \bP^r(\bu_k) )\| ^2\\
        &\leq C \left(h^{2m+2} + \|S^{r}\|_2 \ h^{2m+4} + (1 + \| S^{r}\|_2) \Delta t^4 + \sum_{j = r+1}^R \| \nabla \bvarphi_j \|^2 \lambda_j \right) \ (K + 1), 
    \end{align*}
    which would have multiplied \eqref{eqn:32_eq20} by $K + 1$, which scales as $(\Delta t)^{-1}$ and would have caused the suboptimality.
    \end{proof}

We are now in position to state the main result of this work, which is an extension to ADL-ROM of \citep[Theorem 3.2]{layton2008numerical} and \citep[Theorem 4.1]{mohebujjaman2017energy}.

\begin{theorem}\label{thm:32}
    Let $(\bu, p)$ be a strong solution of the NSE \eqref{eqn:nse-1}-\eqref{eqn:nse-2} with homogeneous Dirichlet boundary conditions\footnote{The results can be extended to the inhomogeneous case with some additional work, but without major difficulties.} for $\bu$.
    Assume the initial conditions are $\bu_0^r=\bP^{r}\left(\bu_{0}\right)$ and $\bu_1^r=\bP^{r}\left(\bu_{1}\right)$. Then, under the regularity assumption on the exact solution (Assumption \ref{ass:reg_ex_sol}), the assumption on the FE approximation (Assumption \ref{assumption:finite-element}), and the assumption on the ROM projection error (Assumption \ref{assump:POD_approximation}), there exists $\Delta t^{*}>0$ such that, for all $\Delta t<\Delta t^{*}$, the following bound holds: 
    \begin{align}
    & \left\|\bu_{K}-\bu^{r}_{K}\right\|^{2}+ \Delta t \sum_{k=1}^{K-1}\left\|\nabla\left(\bu_{k+1}-\bu^{r}_{k+1}\right)\right\|^{2} \nonumber\\
    \leq & C \mathcal{G}(\delta, \Delta t, h, \|S^{r}\|_2,\{\lambda_j\}_{j=r+1}^{R},\{\|\nabla \bvarphi_j\|\}_{j=r+1}^{R}),
    \label{eqn:theorem-bound}
    \end{align}
    where $C=C(N,\bu_0^r,\bu_1^r,\bff,\nu)$ and $\mathcal{G}$ is defined as
    \begin{align}\label{eqn:f_32}
        &\mathcal{G}(\delta, \Delta t, h, \|S^{r}\|_2,\{\lambda_j\}_{j=r+1}^{R},\{\|\nabla\bvarphi_j\|\}_{j=r+1}^{R}) \nonumber \\
        & = 
        \left( 1 + \left\|S^{r}\right\|_2^{\frac{1}{2}} \right) \left(h^{2m+4}+\Delta t^4+{\sum_{j=r+1}^{R} \lambda_{j}}\right)  \nonumber \\
        & + \left( 1 + \delta^2 \left\|S^{r}\right\|_2^{\frac{1}{2}} \right) \left(h^{2m+2} + \|S^{r}\|_{2} \ h^{2m+4} + (1+\|S^{r}\|_{2})\Delta t^4 + \sum_{j=r+1}^{R}\|\nabla \bvarphi_j\|^{2}\lambda_j\right)\nonumber\\
        &+ \delta^4 \left\|S^{r}\right\|_2^{\frac{1}{2}}.
    \end{align}
\end{theorem}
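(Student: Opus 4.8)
The plan is to carry out a standard energy-type error analysis for the BDF2 time discretization, adapted to the ROM setting and the approximate-deconvolution Leray nonlinearity. First I would split the error at each timestep as $\bu_{k+1} - \bu^r_{k+1} = \bigl(\bu_{k+1} - \bP^r(\bu_{k+1})\bigr) + \bigl(\bP^r(\bu_{k+1}) - \bu^r_{k+1}\bigr) =: \bfeta_{k+1} + \bphi^r_{k+1}$, where $\bfeta_{k+1}$ is the ROM-projection error (controlled by Assumption~\ref{assump:POD_approximation} and Lemma~\ref{lemma:L2_ROM_projection_error}) and $\bphi^r_{k+1} \in \bX^r$ is the discrete remainder on which we will run the Gronwall argument. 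Subtracting the ADL-ROM scheme~\eqref{eqn:BDF2_eq} from the weak NSE~\eqref{eq:NSE_weak} evaluated at $t_{k+1}$, including the BDF2 truncation error $\boldsymbol{\tau}_{k+1}$ (which is $O(\Delta t^2)$ pointwise and contributes $O(\Delta t^4)$ after squaring and summing, using the regularity in Assumption~\ref{ass:reg_ex_sol}), and testing with $\bv^r = \bphi^r_{k+1}$, I would obtain an equation for $\bphi^r_{k+1}$ whose left-hand side, via the algebraic identity~\eqref{eqn:algid}, telescopes into the "$G$-norm" quantity $\tfrac14\bigl[\|\bphi^r_{k+1}\|^2 + \|2\bphi^r_{k+1} - \bphi^r_k\|^2\bigr]$ plus a nonnegative dissipation term $\tfrac14\|\bphi^r_{k+1} - 2\bphi^r_k + \bphi^r_{k-1}\|^2$, and whose viscous term gives $\nu\Delta t\|\nabla\bphi^r_{k+1}\|^2$. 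The initial conditions $\bu_0^r = \bP^r(\bu_0)$, $\bu_1^r = \bP^r(\bu_1)$ make $\bphi^r_0 = \bphi^r_1 = 0$, so the telescoped sum starts cleanly.

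The right-hand side consists of the projection-error terms $\left(\tfrac{3\bfeta_{k+1} - 4\bfeta_k + \bfeta_{k-1}}{2\Delta t}, \bphi^r_{k+1}\right)$ (note the $L^2$-orthogonality $(\bfeta_j, \bv^r) = 0$ kills this whole term, which is the point of using the ROM projection), the viscous cross term $\nu(\nabla\bfeta_{k+1}, \nabla\bphi^r_{k+1})$, the BDF2 truncation-error term, and — the crux — the nonlinear consistency error. The nonlinear term I would write as the difference $b^*(\bu_{k+1}, \bu_{k+1}, \bphi^r_{k+1}) - b^*(D_N^r\overline{\bu^r_{k+1}}, \bu^r_{k+1}, \bphi^r_{k+1})$, then add and subtract intermediate arguments to split it into pieces of the form (i) $b^*(\bu_{k+1} - D_N^r\overline{\bu_{k+1}}, \bu_{k+1}, \bphi^r_{k+1})$ — controlled by Lemma~\ref{lemma:Layton_213_Rom}, which is precisely the AD-error estimate bringing in the $\delta^2$- and $\delta^4$-terms; (ii) $b^*(D_N^r\overline{\bu_{k+1}} - D_N^r\overline{\bu^r_{k+1}}, \bu_{k+1}, \bphi^r_{k+1})$ — here the AD operator applied to the error $\bfeta_{k+1} + \bphi^r_{k+1}$, using the stability bounds in Lemma~\ref{thm:ROM211_split2} (the $\sqrt{\|S^r\|_2}$ factors enter here through~\eqref{eqn:Lem211_s2}); and (iii) $b^*(D_N^r\overline{\bu^r_{k+1}}, \bfeta_{k+1} + \bphi^r_{k+1}, \bphi^r_{k+1})$, where the $b^*(\cdot, \bphi^r_{k+1}, \bphi^r_{k+1}) = 0$ skew-symmetry (Lemma~\ref{Lemma:skew_Symmetric}) eliminates the genuinely dangerous quadratic-in-$\bphi^r$ piece, leaving $b^*(D_N^r\overline{\bu^r_{k+1}}, \bfeta_{k+1}, \bphi^r_{k+1})$. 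On each surviving term I would apply the sharper trilinear bound from Lemma~\ref{Lemma:skew_Symmetric}, namely $b^*(\bw, \bv, \bphi) \le C\sqrt{\|\bw\|\,\|\nabla\bw\|}\,\|\nabla\bv\|\,\|\nabla\bphi\|$, followed by Young's inequality to absorb $\|\nabla\bphi^r_{k+1}\|^2$ into the viscous term; the piece involving $\|\bu^r_{k+1}\|\,\|\nabla\bu^r_{k+1}\|\,\|\nabla\bfeta_{k+1}\|^2$ is exactly what Lemma~\ref{lemma:assumption-u-gradu-gradeta} was built to bound optimally (using the a priori bound of Lemma~\ref{lemma:a_priori_bound_BDF2} for $\|\bu^r_{k+1}\|$ and summing).

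After these bounds, I would sum over $k = 1, \ldots, K-1$, multiply through by $\Delta t$, use the telescoping on the left to get $\|\bphi^r_K\|^2 + \Delta t\sum\nu\|\nabla\bphi^r_{k+1}\|^2$ up to the $\bphi^r_k$ terms appearing on the right with a coefficient that survives the discrete Gronwall Lemma~\ref{lemma:discrete_gronwell} (this is where the threshold $\Delta t^* > 0$ comes from — one needs $\Delta t$ small enough that $d_k < 1/\Delta t$, with $d_k$ essentially $\|\nabla\bu_{k+1}\|$-type quantities bounded via Assumption~\ref{ass:reg_ex_sol}, and the constant in the Gronwall exponent then depends only on $T$, $\nu$, $N$, and the data). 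Finally I would collect all the accumulated constants ($c_k$, $E$) — the projection error from~\eqref{eq:POD_approximation_1}-\eqref{eq:POD_approximation_2}, the AD error from Lemma~\ref{lemma:Layton_213_Rom}, the Lemma~\ref{lemma:assumption-u-gradu-gradeta} contribution, and the $O(\Delta t^4)$ truncation error — into the expression $\mathcal{G}$ in~\eqref{eqn:f_32}, and combine $\|\bu_K - \bu^r_K\| \le \|\bfeta_K\| + \|\bphi^r_K\|$ and similarly for the gradient sum using the triangle inequality and~\eqref{eq:POD_approximation_1}-\eqref{eq:POD_approximation_2}. I expect the main obstacle to be the bookkeeping in the nonlinear term: ensuring that every splitting piece is bounded so that no factor of $\|S^r\|_2$ higher than $\|S^r\|_2^{1/2}$ (multiplying the leading terms) or $\delta^2\|S^r\|_2^{1/2}$ (multiplying the filter terms) appears, which forces careful use of the sharp trilinear estimate and of Lemma~\ref{lemma:assumption-u-gradu-gradeta} rather than naive applications of the POD inverse inequality~\eqref{eq:POD_inv_ineq}; getting a coarser power of $\|S^r\|_2$ anywhere would break the claimed form of $\mathcal{G}$.
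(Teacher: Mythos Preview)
Your proposal is correct and follows essentially the same route as the paper's proof: the error decomposition $\bfeta + \bphi^r$, the BDF2 $G$-stability identity~\eqref{eqn:algid}, the orthogonality $(\bfeta_j,\bphi^r_{k+1})=0$ killing the discrete time-derivative of $\bfeta$, the splitting of the nonlinear consistency error into exactly the pieces (i)--(iii) you list (with skew-symmetry removing the dangerous quadratic-in-$\bphi^r$ piece), the use of Lemmas~\ref{thm:ROM211_split2} and~\ref{lemma:Layton_213_Rom} on the AD pieces and Lemma~\ref{lemma:assumption-u-gradu-gradeta} on the $\|\bu^r_{k+1}\|\,\|\nabla\bu^r_{k+1}\|\,\|\nabla\bfeta_{k+1}\|^2$ term, and the discrete Gronwall finish all match the paper. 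The one item you omit is the pressure term $-(p_{k+1},\nabla\cdot\bphi^r_{k+1})$, which does \emph{not} vanish since $\bX^r$ is only weakly divergence-free against $Q^h$; the paper handles it by writing $-(p_{k+1}-q^h,\nabla\cdot\bphi^r_{k+1})$ for arbitrary $q^h\in Q^h$ and invoking Assumption~\ref{assumption:finite-element}, contributing an additional $h^{2m+2}$ already absorbed into the second line of~$\mathcal{G}$.
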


\begin{proof}
    We begin with the weak formulation of the NSE given in \eqref{eq:NSE_weak}. At a continuous level, recall that for $\bv, \bw, \bu \in \bX$,
    \begin{equation*}
        b^{*}(\bu, \bv, \bw)=b(\bu, \bv, \bw):=(\bu \cdot \nabla \bv, \bw).
    \end{equation*}

    Then, at time $t_{k+1}$, 
    the NSE in \eqref{eq:NSE_weak} with $\bv= 
    \bv^r \in \bX^r$ 
    yield 
    \begin{equation}\label{eqn:32_eq1}
    \left(\bu_{t}^{k+1}, 
    \bv^r\right)+b^{*}\left(\bu_{k+1}, \bu_{k+1}, 
    \bv^r\right)+\nu\left(\nabla \bu_{k+1}, \nabla 
    \bv^r\right)-\left(p_{k+1}, \nabla \cdot 
    \bv^r\right)=\left(\bff_{k+1}, 
    \bv^r\right).
    \end{equation}

    Subtracting (\ref{eqn:BDF2_eq}) 
    from (\ref{eqn:32_eq1}), we obtain
    \begin{gather}\label{eqn:32_eq2}
        \left(\bu_{t}^{k+1}-\frac{3 \bu^{r}_{k+1}-4 \bu^{r}_{k}+\bu^{r}_{k-1}}{2 \Delta t}, 
        \bv^r\right) + b^{*}\left(\bu_{k+1}, \bu_{k+1}, 
        \bv^r\right) - b^{*}\left(D_{N}^{r}\oburk{k+1}, \bu^{r}_{k+1}, 
        \bv^r\right) \nonumber \\
        + \nu\left(\nabla \bu_{k+1} - \nabla \bu^{r}_{k+1}, \nabla 
        \bv^r \right) - \left(p_{k+1}, \nabla \cdot 
        \bv^r\right)=0.
    \end{gather}

    Adding and subtracting $\left(\dfrac{3 \bu_{k+1}-4 \bu_{k}+\bu_{k-1}}{2 \Delta t}, 
    \bv^r\right)$, we have
    {
    \begin{align}\label{eqn:32_eq3}
        &\left(\bu_{t}^{k+1}-\frac{3 \bu_{k+1}-4 \bu_{k}+\bu_{k-1}}{2 \Delta t}, 
        \bv^r\right) + \left(\frac{3 \bu_{k+1}-4 \bu_{k}+\bu_{k-1}}{2 \Delta t}-\frac{3 \bu^{r}_{k+1}-4 \bu^{r}_{k}+\bu^{r}_{k-1}}{2 \Delta t}, 
        \bv^r\right) \nonumber \\
        &\quad +b^{*}\left(\bu_{k+1}, \bu_{k+1}, 
        \bv^r\right) - b^{*}\left(D_{N}^{r}\oburk{k+1}, \bu^{r}_{k+1}, 
        \bv^r\right) + \nu\left(\nabla \bu_{k+1}-\nabla \bu^{r}_{k+1}, \nabla 
        \bv^r\right) = \left(p_{k+1}, \nabla \cdot 
        \bv^r\right).
    \end{align}}

    We decompose the error as follows: 
    \begin{equation*}
        \bu_{k+1}-\bu^{r}_{k+1}=\left(\bu_{k+1}-\bw^{r}_{k+1}\right) - \left(\bu^{r}_{k+1}-\bw^{r}_{k+1}\right) = \bfeta_{k+1}-\bPhi^{r}_{k+1},
    \end{equation*}
    where $\bw^{r}_{k+1}=\bP^{r}\left(\bu_{k+1}\right)$ is the $L^{2}$ projection of $\bu_{k+1}$ on $\bX^r$, i.e.,
    \begin{equation}\label{eqn:32_eq4}
        \left(\bfeta_{k+1}, \bvarphi^{r}\right) = \left(\bu_{k+1}-\bw^{r}_{k+1}, \bvarphi^{r}\right)=0, \quad\forall \bvarphi^{r} \in \bX^{r}.
    \end{equation}

    Choosing $\bv^r 
    =\bPhi^{r}_{k+1}$ and letting $\br_{k+1}=\bu_{t}^{k+1}-\dfrac{3 \bu_{k+1}-4 \bu_{k}+\bu_{k-1}}{2 \Delta t}$, (\ref{eqn:32_eq3}) becomes
    \begin{gather}
        \frac{1}{2 \Delta t}\left(3 (\bfeta_{k+1}- \bPhi^{r}_{k+1})-4\left(\bfeta_{k}-\bPhi^{r}_{k}\right)+(\bfeta_{k-1}-\bPhi^{r}_{k-1}), \bPhi^{r}_{k+1}\right) - \nu\left(\nabla \bPhi^{r}_{k+1}, \nabla \bPhi^{r}_{k+1}\right) \nonumber \\
        + \nu\left(\nabla \bfeta_{k+1}, \nabla \bPhi^{r}_{k+1}\right) + b^{*}\left(\bu_{k+1}, \bu_{k+1}, \bPhi^{r}_{k+1}\right) - b^{*}\left(D_{N}^{r}\oburk{k+1}, \bu^{r}_{k+1}, \bPhi^{r}_{k+1}\right) \nonumber \\
        + \left(\br_{k+1}, \bPhi^{r}_{k+1}\right)=\left(p_{k+1}, \nabla \cdot \bPhi^{r}_{k+1}\right). \label{eqn:32_eq5}
    \end{gather}

    Since $\bPhi^{r}_{k+1} \in \bX^{r}$, \eqref{eqn:32_eq4} implies that $\left(\bfeta_\kappa, \bPhi^{r}_{k+1}\right)=0$ for $\kappa=k-1, k, k+1$. Thus, \eqref{eqn:32_eq5} becomes 
    \begin{gather}
        \frac{1}{2 \Delta t}\left(3 \bPhi^{r}_{k+1}-4\bPhi^{r}_{k}+\bPhi^{r}_{k-1}, \bPhi^{r}_{k+1}\right) + \nu\left(\nabla \bPhi^{r}_{k+1}, \nabla \bPhi^{r}_{k+1}\right) \nonumber \\
        = \left(\br_{k+1}, \bPhi^{r}_{k+1}\right) + \nu\left(\nabla \bfeta_{k+1}, \nabla \bPhi^{r}_{k+1}\right) + b^{*}\left(\bu_{k+1}, \bu_{k+1}, \bPhi^{r}_{k+1}\right) \nonumber \\
        - b^{*}\left(D_{N}^{r}\oburk{k+1}, \bu^{r}_{k+1}, \bPhi^{r}_{k+1}\right) - \left(p_{k+1}, \nabla \cdot \bPhi^{r}_{k+1}\right). \label{eqn:32_eq6}
    \end{gather}

    We write the left-hand side of (\ref{eqn:32_eq6}) as
    \begin{align}\label{eqn:32_eq7}
        & \frac{1}{4\Delta t}\left(\left\|\bPhi^{r}_{k+1}\right\|^{2} + \left\|2\bPhi^{r}_{k+1}-\bPhi^{r}_{k}\right\|^{2}\right) - \frac{1}{4\Delta t}\left(\left\|\bPhi^{r}_{k}\right\|^{2} + \left\|2\bPhi^{r}_{k}-\bPhi^{r}_{k-1}\right\|^{2}\right) \nonumber \\
        &  + \frac{1}{4\Delta t}\left\|\bPhi^{r}_{k+1} - 2\bPhi^{r}_{k} + \bPhi^{r}_{k-1}\right\|^{2} + \nu\left\|\nabla \bPhi^{r}_{k+1}\right\|^{2}
    \end{align}
    by applying the algebraic identity (\ref{eqn:algid}). 

    We now use the Cauchy-Schwarz, Young, and Poincaré inequalities to bound the first two addends on the right-hand side of \eqref{eqn:32_eq6} as follows:
    \begin{align}
        & \left(\br_{k+1}, \bPhi^{r}_{k+1}\right) \leq\left\|\br_{k+1}\right\|\left\| \bPhi^{r}_{k+1}\right\| \leq C_{1}\left(\frac{1}{4 \varepsilon}\left\|\br_{k+1}\right\|^{2}+\varepsilon\left\|\nabla \bPhi^{r}_{k+1}\right\|^{2}\right),  \label{eqn:32_eq8}  \\
        & \nu\left(\nabla \bfeta_{k+1}, \nabla \bPhi^{r}_{k+1}\right) \leq \nu\left\|\nabla \bfeta_{k+1}\right\|\left\|\nabla \bPhi^{r}_{k+1}\right\| \leq C_{2}\left(\frac{1}{4 \varepsilon}\left\|\nabla \bfeta_{k+1}\right\|^{2}+\varepsilon\left\|\nabla \bPhi^{r}_{k+1}\right\|^{2}\right),
        \label{eqn:32_eq9}
    \end{align}
    with $\varepsilon > 0 $ to be chosen later.

    The non-linear terms on the right-hand side of \eqref{eqn:32_eq6} can be written as follows: 
    {\begin{align}\label{eqn:32_eq10}
        & b^{*}\left(\bu_{k+1}, \bu_{k+1}, \bPhi^{r}_{k+1}\right) - b^{*}\left(D_{N}^{r}\oburk{k+1}, \bu^{r}_{k+1}, \bPhi^{r}_{k+1}\right) \nonumber \\
        & = b^{*}\left(\bu_{k+1}, \bu_{k+1}, \bPhi^{r}_{k+1}\right) - b^{*}\left(D_{N}^{r}\oburk[]{k+1}, \bu_{k+1}, \bPhi^{r}_{k+1}\right) \nonumber \\
        & \quad + b^{*}\left(D_{N}^{r}\oburk[]{k+1}, \bu_{k+1}, \bPhi^{r}_{k+1}\right) - b^{*}\left(D_{N}^{r}\oburk{k+1}, \bu^{r}_{k+1}, \bPhi^{r}_{k+1}\right) \nonumber \\
        & = b^{*}\left(\bu_{k+1}-D_{N}^{r}\oburk[]{k+1}, \bu_{k+1}, \bPhi^{r}_{k+1}\right) + b^{*}\left(D_{N}^{r}\oburk[]{k+1}, \bu_{k+1}, \bPhi^{r}_{k+1}\right) \\
        & \quad - b^{*}\left(D_{N}^{r}\oburk{k+1}, \bu_{k+1}, \bPhi^{r}_{k+1}\right) + b^{*}\left(D_{N}^{r}\oburk{k+1}, \bu_{k+1}, \bPhi^{r}_{k+1}\right) - b^{*}\left(D_{N}^{r}\oburk{k+1}, \bu^{r}_{k+1}, \bPhi^{r}_{k+1}\right) \nonumber \\
        & = b^{*}\left(\bu_{k+1}-D_{N}^{r}\oburk[]{k+1}, \bu_{k+1}, \bPhi^{r}_{k+1}\right) + b^{*}\left(D_{N}^{r}{\obfetak{k+1}}, \bu_{k+1}, \bPhi^{r}_{k+1}\right) \nonumber\\
        & \quad - b^{*}\left(D_{N}^{r}{\obPhirk{k+1}}, \bu_{k+1}, \bPhi^{r}_{k+1}\right) + b^{*}\left(D_{N}^{r}\oburk{k+1}, \bfeta_{k+1}, \bPhi^{r}_{k+1}\right) - \cancel{b^{*}\left(D_{N}^{r}\oburk{k+1}, \bPhi^{r}_{k+1}, \bPhi^{r}_{k+1}\right)}, \nonumber
    \end{align}}
    where in the last term we have used Lemma \ref{Lemma:skew_Symmetric}.

    For the last three addends in \eqref{eqn:32_eq10}, we apply Lemmas \ref{Lemma:skew_Symmetric} and \ref{thm:ROM211_split2}, and Young inequality: 
    {\begin{align}
        b^{*}\left(D_{N}^{r}\oburk{k+1}, \bfeta_{k+1}, \bPhi^{r}_{k+1}\right) & \stackrel{\ref{Lemma:skew_Symmetric}}{\leq}  C\left\|D_{N}^{r}\oburk{k+1}\right\|^{\frac{1}{2}}\left\|\nabla D_{N}^{r}\oburk{k+1}\right\|^{\frac{1}{2}}\left\|\nabla \bfeta_{k+1}\right\|\left\|\nabla \bPhi^{r}_{k+1}\right\| \nonumber \\
        &  \stackrel{\ref{thm:ROM211_split2}}{\leq}  C\left\|\bu^{r}_{k+1}\right\|^{\frac{1}{2}}\left\|\nabla \bu^{r}_{k+1}\right\|^{\frac{1}{2}}\left\|\nabla \bfeta_{k+1}\right\|\left\|\nabla \bPhi^{r}_{k+1}\right\| \nonumber \\
        &  \leq  C_{3}\left(\frac{1}{4 \varepsilon}\left\|\bu^{r}_{k+1}\right\|\left\|\nabla \bu^{r}_{k+1}\right\|\left\|\nabla \bfeta_{k+1}\right\|^{2}+\varepsilon\left\|\nabla \bPhi^{r}_{k+1}\right\|^{2}\right).  \label{eqn:32_eq11} \\
        b^{*}\left(D_{N}^{r}\obfetak{k+1}, \bu_{k+1}, \bPhi^{r}_{k+1}\right) &  \stackrel{\ref{Lemma:skew_Symmetric}}{\leq}   C\left\|D_{N}^{r}\obfetak{k+1}\right\|^{\frac{1}{2}}\left\|\nabla D_{N}^{r}\obfetak{k+1}\right\|^{\frac{1}{2}}\left\|\nabla \bu_{k+1}\right\|\left\|\nabla \bPhi^{r}_{k+1}\right\| \nonumber \\
        &  \stackrel{\ref{thm:ROM211_split2}}{\leq}  C\left\|\bfeta_{k+1}\right\|^{\frac{1}{2}} \left\|S^{r}\right\|_2^{\frac{1}{4}}\left\|\bfeta_{k+1}\right\|^{\frac{1}{2}}\left\|\nabla \bu_{k+1}\right\|\left\|\nabla \bPhi^{r}_{k+1}\right\| \nonumber \\
        &  \leq  C_{4}\left(\frac{1}{4 \varepsilon}\left\|S^{r}\right\|_2^{\frac{1}{2}}\left\|\bfeta_{k+1}\right\|^{2}\left\|\nabla \bu_{k+1}\right\|^{2} + \varepsilon\left\|\nabla \bPhi^{r}_{k+1}\right\|^{2}\right).   \label{eqn:32_eq12} \\
        b^{*}\left(D_{N}^{r}{\obPhirk{k+1}}, \bu_{k+1}, \bPhi^{r}_{k+1}\right) &  \stackrel{\ref{Lemma:skew_Symmetric}}{\leq}   C\left\|D_{N}^{r}{\obPhirk{k+1}}\right\|^{\frac{1}{2}}\left\|\nabla D_{N}^{r}{\obPhirk{k+1}}\right\|^{\frac{1}{2}}\left\|\nabla \bu_{k+1}\right\|\left\|\nabla \bPhi^{r}_{k+1}\right\| \nonumber \\
        &  \stackrel{\ref{thm:ROM211_split2}}{\leq}  C\left\|\bPhi^{r}_{k+1}\right\|^{\frac{1}{2}}\left\|\nabla \bu_{k+1}\right\|\left\|\nabla \bPhi^{r}_{k+1}\right\|^{\frac{3}{2}} \nonumber \\
        &  \leq  C_{5}\left(\frac{1}{4 \varepsilon}\left\|\nabla \bu_{k+1}\right\|^{4}\left\|\bPhi^{r}_{k+1}\right\|^{2}+\varepsilon\left\|\nabla \bPhi^{r}_{k+1}\right\|^{2}\right).   \label{eqn:32_eq13}
    \end{align}}
    Similarly, for the first addend in \eqref{eqn:32_eq10}, we get
    {\begin{align}
        b^{*}\left(\bu_{k+1}-D_{N}^{r}\oburk[]{k+1}, \bu_{k+1}, \bPhi^{r}_{k+1}\right) &  \stackrel{\ref{Lemma:skew_Symmetric}}{\leq}  C\left\|\bu_{k+1}-D_{N}^{r}\oburk[]{k+1}\right\|^{\frac{1}{2}}\left\|\nabla\left(\bu_{k+1}-D_{N}^{r}\oburk[]{k+1}\right)\right\|^{\frac{1}{2}}\left\|\nabla \bu_{k+1}\right\|\left\|\nabla \bPhi^{r}_{k+1}\right\| \nonumber \\
        & \leq C_{6}\left(\frac{1}{4 \varepsilon}\left\|\bu_{k+1}-D_{N}^{r}\oburk[]{k+1}\right\|\left\|\nabla\left(\bu_{k+1}-D_{N}^{r}\oburk[]{k+1}\right)\right\|\left\|\nabla \bu_{k+1}\right\|^{2} \right. \nonumber \\
        & \left. \quad \quad \quad + \varepsilon\left\|\nabla \bPhi^{r}_{k+1}\right\|^{2}\right). \label{eqn:32_eq14}
    \end{align}}

    Since $\bPhi^{r}_{k+1} \in \bX^{r}$ and $\bX^{r}$ is a space of weakly divergence-free functions, the pressure term on the right-hand side of (\ref{eqn:32_eq6}) can be written as
    \begin{equation*}
        -\left(p_{k+1}, \nabla \cdot \bPhi^{r}_{k+1}\right)=-\left(p_{k+1}-q^{h}, \nabla \cdot \bPhi^{r}_{k+1}\right),
    \end{equation*}
    where $q^{h}$ is an arbitrary function in $Q^{h}$. Thus, the pressure term can be estimated as follows by using the Cauchy-Schwarz and Young inequalities: 
    \begin{equation}\label{eqn:32_eq15}
        -\left(p_{k+1}, \nabla \cdot \bPhi^{r}_{k+1}\right) \leq C_{7} \left(\frac{1}{4 \varepsilon}\left\|p_{k+1}-q^{h}\right\|^{2}+\varepsilon\left\|\nabla\bPhi^{r}_{k+1}\right\|^{2}\right).
    \end{equation}

    Now choose $\varepsilon$ such that $4 \nu - 4 \varepsilon\left(C_1 + C_2 + C_{3} + C_{4} + C_{5} + C_{6} + C_7\right) = C_{8}$ for some $C_{8} \in (0, 4 \nu)$. 
    Then, substituting inequalities (\ref{eqn:32_eq7})-(\ref{eqn:32_eq9}), (\ref{eqn:32_eq11})-(\ref{eqn:32_eq14}), and (\ref{eqn:32_eq15}) in (\ref{eqn:32_eq6}), we obtain
    {\begin{align}\label{eqn:32_eq16}
        & \left\|\bPhi^{r}_{k+1}\right\|^{2} + \left\|2\bPhi^{r}_{k+1}-\bPhi^{r}_{k}\right\|^{2}\ - \left(\left\|\bPhi^{r}_{k}\right\|^{2} + \left\|2\bPhi^{r}_{k}-\bPhi^{r}_{k-1}\right\|^{2}\right) + \left\|\bPhi^{r}_{k+1} - 2\bPhi^{r}_{k} + \bPhi^{r}_{k-1}\right\|^{2} \nonumber \\
        & + C_{8} \Delta t \left\|\nabla \bPhi^{r}_{k+1}\right\|^{2} \nonumber \\
        & \quad \leq C_{8}\left(\Delta t\left\|\br_{k+1}\right\|^{2} + \Delta t\left\|\nabla \bfeta_{k+1}\right\|^{2 } + \Delta t\left\|\bu^{r}_{k+1}\right\|\left\|\nabla \bu^{r}_{k+1}\right\|\left\|\nabla \bfeta_{k+1}\right\|^{2}\right. \nonumber \\
        & \quad \quad + \Delta t \left\|S^{r}\right\|_2^{\frac{1}{2}}\left\|\bfeta_{k+1}\right\|^{2} \left\|\nabla \bu_{k+1}\right\|^{2} + \Delta t\left\|\nabla \bu_{k+1}\right\|^{4}\left\|\bPhi^{r}_{k+1}\right\|^{2} \nonumber \\
        & \left.\quad \quad + \Delta t \left\|\bu_{k+1} - D_{N}^{r}\oburk[]{k+1}\right\|\left\|\nabla\left(\bu_{k+1}-D_{N}^{r}\oburk[]{k+1}\right)\right\|\left\|\nabla \bu_{k+1}\right\|^{2} + \Delta t\left\|p_{k+1}-q_{h}\right\|^{2}\right).
    \end{align}}

    Summing (\ref{eqn:32_eq16}) from $k=1$ to $k=K-1$, we have
    {\begin{align}\label{eqn:32_eq17} 
        & \left\|\bPhi^{r}_{K}\right\|^{2} + 
        C_{8} \Delta t \sum_{k=1}^{K-1}\left\|\nabla \bPhi^{r}_{k+1}\right\|^{2}
         \leq \left\|\bPhi^{r}_{1}\right\|^{2} + \left\|2\bPhi^{r}_{1} - \bPhi^{r}_{0}\right\|^{2} \nonumber \\
        & \quad + C_{8} \Delta t \left(\sum_{k=1}^{K-1}\left\|\br_{k+1}\right\|^{2} + \sum_{k=1}^{K-1}\left\|\nabla \bfeta_{k+1}\right\|^{2} + \sum_{k=1}^{K-1}\left\|\bu^{r}_{k+1}\right\|\left\|\nabla \bu^{r}_{k+1}\right\|\left\|\nabla \bfeta_{k+1}\right\|^{2}\right. \nonumber \\
        & \quad +\sum_{k=1}^{K-1}\left\|\bu_{k+1}-D_{N}^{r}\oburk[]{k+1}\right\|\left\|\nabla\left(\bu_{k+1}-D_{N}^{r}\oburk[]{k+1}\right)\right\|\left\|\nabla \bu_{k+1}\right\|^{2}  \nonumber \\
        & \left. \quad  + \left\|S^{r}\right\|_2^{\frac{1}{2}} \sum_{k=1}^{K-1}\left\|\nabla \bu_{k+1}\right\|^{2}\left\|\bfeta_{k+1}\right\|^{2} + \sum_{k=1}^{K-1}\left\|p_{k+1}-q_{h}\right\|^{2} + \sum_{k=1}^{K-1}\left\|\nabla \bu_{k+1}\right\|^{4}\left\|\bPhi^{r}_{k+1}\right\|^{2} \right).
    \end{align}}

    The first two terms on the right-hand side of (\ref{eqn:32_eq17}) vanish, since $\bu^{r}_{0}=\bw^{r}_{0}$ and $\bu^{r}_{1}=\bw^{r}_{1}$.

    To estimate the third term on the right-hand side of~\eqref{eqn:32_eq17}, we expand $\bu^k$ and $\bu^{k-1}$ about $t_{k+1}$ with Taylor's theorem. Then,
    
    \begin{align}
        \bu^k = \bu^{k+1} - \bu_t^{k+1} \Delta t + \frac{1}{2} \bu_{tt}^{k+1} \Delta t^2 
        -
        \int_{t_k}^{t_{k+1}} \frac{1}{2} \bu_{ttt}(s) (
        t_{k} - s)^2 ds, \label{eq:taylor_exp_u_n}\\
          \bu^{k-1} = \bu^{k+1} - 2\bu_t^{k+1} \Delta t + 2 \bu_{tt}^{k+1} \Delta t^2 
          -
          \int_{t_{k-1}}^{t_{k+1}} \frac{1}{2} \bu_{ttt}(s) (
          t_{k-1} - s)^2 ds. 
          \label{eq:taylor_exp_u_n-1}
    \end{align}
    Multiplying 
    \eqref{eq:taylor_exp_u_n} by $-4$, adding the result  to~\eqref{eq:taylor_exp_u_n-1}, and rearranging terms, we obtain 
    \begin{align}
        2 \bu_t^{k+1} \Delta t - \left(3 \bu^{k+1} - 4 \bu^k + \bu^{k-1} \right)  &= -2 \int_{t_k}^{t_{k+1}} \bu_{ttt}(s) (
        t_{k} - s)^2 ds \notag\\
        &\phantom{=} + \int_{t_{k-1}}^{t_{k+1}} \frac{1}{2} \bu_{ttt}(s) (
        t_{k-1} - s)^2 ds. \label{eq:taylor_expand_r_k+1}
    \end{align}
    The left-hand side of~\eqref{eq:taylor_expand_r_k+1} divided by $2 \Delta t$ is $\br_{k+1}$.
    When taking the $L^2$ norm in space of~\eqref{eq:taylor_expand_r_k+1}, the result is the norm of a difference of two 
    similar integrals. We use the triangle inequality to bound these norms individually such that both have a factor of $\Delta t^{3/2}$ multiplied by a constant.
    We now demonstrate how to bound the first term on the right-hand side of~\eqref{eq:taylor_expand_r_k+1}: 
    \begin{equation}
    \label{eq:taylor_exp_intermediate}
        \left\| -\frac{1}{2\Delta t} \cdot 2 \int_{t_k}^{t_{k+1}} \bu_{ttt}(s) (
        t_{k} - s)^2 ds \right\|
        = \left[ \int_\Omega \left(\int_{t_{k}}^{t_{k+1}} \bu_{ttt}(s) \frac{(t_{k} - s)^2}{\Delta t}  ds\right)^2 d\bx \right]^{1/2}.
    \end{equation}
    Using the Cauchy-Schwarz inequality 
    for the time integral yields
    \begin{flalign}
        &\left[ \int_\Omega \left(\int_{t_{k}}^{t_{k+1}} \bu_{ttt}(s) \frac{(t_{k} - s)^2}{\Delta t}  ds\right)^2  d\bx \right]^{1/2} 
        \notag \\ &\qquad\leq \left[ \int_\Omega \left(\int_{t_{k}}^{t_{k+1}} \bigl(\bu_{ttt}(s)  \bigr)^2 ds\right) \left(\int_{t_{k}}^{t_{k+1}} \frac{(t_{k} - s)^4}{\Delta t^2} ds \right) d\bx \right]^{1/2}. 
\label{eq:taylor_exp_CS_step}
    \end{flalign}
    Noting that $|t_{k} - s| \leq  \Delta t$ 
    and evaluating the integrals in~\eqref{eq:taylor_exp_CS_step}, we 
    obtain 
    \begin{equation}
         \left[ \int_\Omega \left(\int_{t_{k}}^{t_{k+1}} \bigl(\bu_{ttt}(s)  \bigr)^2 ds\right) \left(\int_{t_{k}}^{t_{k+1}} \frac{(t_{k} - s)^4}{\Delta t^2} ds \right) d\bx \right]^{1/2} \leq  \| \bu_{ttt} \|_{L^2(L^2)} \Delta t^{3/2},
        \label{eq:taylor_exp_final_result}
    \end{equation}
    where $ \| \cdot \|_{L^2(L^2)}$ refers to the $L^2$ norm in both space and time over $\Omega$ and the time interval $[t_{k-1},t_{k+1}]$. This change to a larger time interval than in~\eqref{eq:taylor_exp_CS_step} is informed by the next point, which is that the second integral in~\eqref{eq:taylor_expand_r_k+1} may be bounded in a similar manner by noting that $|t_{k-1}-s| \leq 2 \Delta t$ over $[t_{k-1},t_{k+1}]$. Using
    \eqref{eq:taylor_expand_r_k+1}-\eqref{eq:taylor_exp_final_result} with a constant $C_{9}$ depending on $\| \bu_{ttt} \|_{L^2(L^2)}$, we 
    obtain the following bound:

    \begin{equation}
    \label{eqn:32_eq18}
            \Delta t \sum_{k=1}^{K-1}\left\|\br_{k+1}\right\|^{2} \leq C_{9} \Delta t^4.
    \end{equation}

    Using~\eqref{eq:L2_ROM_projection_error_2}, the fourth term on the right-hand side of~\eqref{eqn:32_eq17} can be estimated as follows: 
    {\begin{equation}\label{eqn:32_eq19}
        \Delta t \sum_{k=1}^{K-1}\left\|\nabla \bfeta_{k+1}\right\|^{2} \leq C_{10}\left(h^{2m+2} + h^{2 m+4}\left\|S^{r}\right\|_{2}
         + \Delta t^4\left(1+\left\|S^{r}\right\|_{2}\right) + \sum_{j=r+1}^{R}\left\|\nabla\bvarphi_{j}\right\|^{2} \lambda_{j}\right).
    \end{equation}}

    To estimate the fifth term on the right-hand side of (\ref{eqn:32_eq17}), we use 
    Lemma~\ref{lemma:assumption-u-gradu-gradeta}:
    \begin{align}\label{eqn:32_eq20}
         & \Delta t \sum_{k=1}^{K-1}\left\|\bu^{r}_{k+1}\right\|\left\|\nabla \bu^{r}_{k+1}\right\|\left\|\nabla \bfeta_{k+1}\right\|^{2} 
         \nonumber \\
        & \leq 
        C_{11}\left(h^{2m+2} + h^{2m+4}\left\|S^{r}\right\|_{2} + \Delta t^4\left(1+\left\|S^{r}\right\|_{2}\right) + \sum_{j=r+1}^{R}\left\|\nabla\bvarphi_{j}\right\|^{2} \lambda_{j}\right).
    \end{align}

    To estimate the sixth term on the right-hand side of (\ref{eqn:32_eq17}), we use \eqref{eqn:Lem211_s2}, Lemma \ref{lemma:Layton_213_Rom}, and Assumption \ref{ass:reg_ex_sol}: 
    \begin{align}\label{eqn:32_eq21}
        & \Delta t \sum_{k=1}^{K-1}\left\|\bu_{k+1}-D_{N}^{r}\oburk{k+1}\right\|\left\|\nabla\left(\bu_{k+1}-D_{N}^{r}\oburk{k+1}\right)\right\|\left\|\nabla \bu_{k+1}\right\|^{2} \nonumber \\
        & \quad \stackrel{\eqref{eqn:Lem211_s2}}{\leq} \hat{C}_{12} \sqrt{\|S^{r}\|_2} \ \Delta t \sum_{k=1}^{K-1}\left\|\bu_{k+1}-D_{N}^{r}\oburk{k+1}\right\|^2 \left\|\nabla \bu_{k+1}\right\|^{2} \nonumber \\
        & \quad \stackrel{\ref{lemma:Layton_213_Rom}}{\leq} \widetilde{C}_{12} \sqrt{\|S^{r}\|_2} \ \Delta t \sum_{k=1}^{K-1}\left\|\nabla \bu_{k+1}\right\|^{2} \left[\left(h^{2m+4}+\Delta t^4+\sum_{j=r+1}^{R} \lambda_{j}\right)\right. \nonumber \\
        & \quad \quad \left.+\delta^{2}\left(h^{2m+2}+h^{2m+4}\left\|S^{r}\right\|_{2}+\Delta t^4\left(1+\left\|S^{r}\right\|_{2}\right) +\sum_{j=r+1}^{R}\left\|\nabla\bvarphi_{j}\right\|^{2} \lambda_{j}\right)+\delta^{4}\right] \nonumber \\
        & \quad \stackrel{\ref{ass:reg_ex_sol}}{\leq} C_{12} \sqrt{\|S^{r}\|_2} \ \left[\left(h^{2m+4}+\Delta t^4+\sum_{j=r+1}^{R} \lambda_{j}\right)\right. \nonumber \\
        & \quad \quad \left.+\delta^{2}\left(h^{2m+2} + h^{2m+4}\left\|S^{r}\right\|_{2}+\Delta t^4 \left(1+\left\|S^{r}\right\|_{2}\right) +\sum_{j=r+1}^{R}\left\|\nabla\bvarphi_{j}\right\|^{2} \lambda_{j}\right)+\delta^{4}\right].
    \end{align}

    Using Assumption \ref{assump:POD_approximation}, the seventh term on the right-hand side of (\ref{eqn:32_eq17}) can be bounded as follows: 
    \begin{align}\label{eqn:32_eq22}
        &\left\|S^{r}\right\|_2^{\frac{1}{2}} \Delta t \sum_{k=1}^{K-1}\left\|\nabla \bu_{k+1}\right\|^{2}\left\|\bfeta_{k+1}\right\|^{2} \nonumber \\
        & \quad \stackrel{(\ref{eq:POD_approximation_1})}{\leq} \widetilde{C}_{13} \Delta t \left\|S^{r}\right\|_2^{\frac{1}{2}}\sum_{k=1}^{K-1}\left\|\nabla \bu_{k+1}\right\|^{2}\left(h^{2m+4}+\Delta t^4+\sum_{j=r+1}^{R} \lambda_{j}\right) \nonumber \\
        & \quad \stackrel{\ref{ass:reg_ex_sol}}{\leq} C_{13}\left\|S^{r}\right\|_2^{\frac{1}{2}}\left(h^{2m+4}+\Delta t^4+\sum_{j=r+1}^{R} \lambda_{j}\right).
    \end{align}

    Since in (\ref{eqn:32_eq15}) $q^{h}$ is an arbitrary function in $Q^{h}$, we can use Assumption \ref{assumption:finite-element} to bound the eighth term on the right-hand side of (\ref{eqn:32_eq17}) as follows: 
    \begin{equation}\label{eqn:32_eq23}
        \Delta t \sum_{k=1}^{K-1}\left\|p_{k+1}-q_{h}\right\|^{2} \leq C_{14} h^{2 m + 2}.
    \end{equation}

    Collecting (\ref{eqn:32_eq18})-(\ref{eqn:32_eq23}), equation (\ref{eqn:32_eq17}) becomes
    {\begin{equation}\label{eqn:32_eq24}
        \begin{aligned}
            & \left\|\bPhi^{r}_{K}\right\|^{2} + C_{8} \Delta t \sum_{k=1}^{K-1} \left\|\nabla \bPhi^{r}_{k+1} \right\|^{2} \\
            & \quad \leq C_{15}\left\{\Delta t \sum_{k=1}^{K-1}\left\|\nabla\bu_{k+1}\right\|^{4}\left\|\bPhi^{r}_{k+1}\right\|^{2}\right. \\
            & \quad \quad + \Delta t^4 + \left(h^{2m+2} + h^{2m+4}\left\|S^{r}\right\|_{2}
             + \Delta t^4\left(1+\left\|S^{r}\right\|_{2}\right) + \sum_{j=r+1}^{R}\left\|\nabla\bvarphi_{j}\right\|^{2} \lambda_{j}\right) 
             \\
            & \quad \quad + \left(h^{2m+2} + h^{2m+4}\left\|S^{r}\right\|_{2} + \Delta t^4\left(1+\left\|S^{r}\right\|_{2}\right) + \sum_{j=r+1}^{R}\left\|\nabla\bvarphi_{j}\right\|^{2} \lambda_{j}\right) + \left\|S^{r}\right\|_2^{\frac{1}{2}} \left(h^{2m+4}+\Delta t^4+\sum_{j=r+1}^{R} \lambda_{j}\right)\\
            & \quad \quad  +\delta^{2} \left\|S^{r}\right\|_2^{\frac{1}{2}} \left(h^{2m+2}+h^{2m+4}\left\|S^{r}\right\|_{2}+\Delta t^4 \left(1+\left\|S^{r}\right\|_{2}\right) +\sum_{j=r+1}^{R}\left\|\nabla\bvarphi_{j}\right\|^{2} \lambda_{j}\right)\\
            & \quad \quad + \left.\delta^{4} \left\|S^{r}\right\|_2^{\frac{1}{2}}  + \left\|S^{r}\right\|_2^{\frac{1}{2}}\left(h^{2m+4}+\Delta t^4+\sum_{j=r+1}^{R} \lambda_{j}\right) + h^{2m+2}\right\} 
            \\ 
            & \quad = C_{15}\left\{\Delta t \sum_{k=1}^{K-1}\left\|\nabla\bu_{k+1}\right\|^{4}\left\|\bPhi^{r}_{k+1}\right\|^{2}\right. \\
            & \quad \quad + \left(h^{2m+2} + h^{2m+4}\left\|S^{r}\right\|_{2} 
             + \Delta t^4\left(1+\left\|S^{r}\right\|_{2}\right) + \sum_{j=r+1}^{R}\left\|\nabla\bvarphi_{j}\right\|^{2} \lambda_{j}\right) \\
            & \quad \quad  +\delta^{2} \left\|S^{r}\right\|_2^{\frac{1}{2}} \left(h^{2m+2} + h^{2m+4}\left\|S^{r}\right\|_{2}+\Delta t^4 \left(1+\left\|S^{r}\right\|_{2}\right) +\sum_{j=r+1}^{R}\left\|\nabla\bvarphi_{j}\right\|^{2} \lambda_{j}\right)\\
            & \quad \quad +\left.\delta^{4} \left\|S^{r}\right\|_2^{\frac{1}{2}} + \left\|S^{r}\right\|_2^{\frac{1}{2}} \left(h^{2m+4}+\Delta t^4+{\sum_{j=r+1}^{R} \lambda_{j}}\right)\right\}\\
            & \quad \stackrel{(\ref{eqn:f_32})}{=} C_{15}\left\{\Delta t \sum_{k=1}^{K-1}\left\|\nabla \bu_{k+1}\right\|^{4}\left\|\bPhi^{r}_{k+1}\right\|^{2} + \ \mathcal{G}(\delta, \Delta t, h, \|S^{r}\|_2,\{\lambda_j\}_{j=r+1}^{R},\{\|\nabla\bvarphi_j\|\}_{j=r+1}^{R})\right\}.
        \end{aligned}
    \end{equation}}

    Lemma \ref{lemma:discrete_gronwell} implies that, for small enough $\Delta t$ (i.e., $\Delta t<$ $(C_{15} \max _{1 \leq k \leq K}\left\|\nabla \bu_{k}\right\|^{4})^{-1})$, the following inequality holds: 
    \begin{align}\label{eqn:32_eq25}
        & \left\|\bPhi^{r}_{K}\right\|^{2} + C_{8} \Delta t \sum_{k=1}^{K-1}\left\|\nabla \bPhi^{r}_{k+1}\right\|^{2} \nonumber \\
        & \quad \leq C_{16} \ \mathcal{G}(\delta, \Delta t, h, \|S^{r}\|_2,\{\lambda_j\}_{j=r+1}^{R},\{\|\nabla\bvarphi_j\|\}_{j=r+1}^{R}).
    \end{align}

    By using (\ref{eqn:32_eq25}), the triangle inequality, and (\ref{eq:L2_ROM_projection_error_2})-(\ref{eq:POD_approximation_1}),
    we obtain
    \begin{align}\label{eqn:32_eq26}
        & \left\|\bu_{K}-\bu^{r}_{K}\right\|^{2}+\Delta t \sum_{k=1}^{K-1}\left\|\nabla\left(\bu_{k+1}-\bu^{r}_{k+1}\right)\right\|^{2} \\
        & \quad \leq C \mathcal{G}(\delta, \Delta t, h, \|S^{r}\|_2,\{\lambda_j\}_{j=r+1}^{R},\{\|\nabla\bvarphi_j\|\}_{j=r+1}^{R}).
    \end{align}

This completes the proof.
\unskip\nobreak\hspace*{9.5cm} \end{proof}

\section{Numerical Results}
    \label{sec:numerical-results}
    
In this section, we conduct a numerical investigation of the theoretical results presented in Section \ref{sec:adl-rom-numerical-analysis}. Our aim is to assess if the numerical results align with the {\emph{a priori}} error 
bounds for 
ROM approximate deconvolution in Lemma \ref{lemma:Layton_213_Rom} and ADL-ROM approximation in Theorem \ref{thm:32}.

For our numerical investigation, we adopt the same test problem and computational setup as those described in \citep[Section 5]{xie2018numerical}. The problem involves the $2$D incompressible 
NSE with a known analytical solution. Specifically, the exact velocity, denoted by $\bu=(u, v)$, is defined by $u=\frac{2}{\pi} \arctan (-500(y-t)) \sin (\pi y)$ and $v=\frac{2}{\pi} \arctan (-500(x-t)) \sin (\pi x)$. Additionally, the exact pressure is constant, given by $p=0$. We set the viscosity $\nu=10^{-3}$ and select the forcing term to match the exact solution. The spatial domain $[0,1] \times [0,1]$ is discretized using Taylor-Hood finite elements with a mesh size of $h=\frac{1}{64}$.

To construct the 
POD basis, we gather snapshots over the time interval $[0,1]$ at intervals of $\Delta t=10^{-2}$, interpolating the analytical solution $(u, v)$ on the finite element mesh. Rather than employing the common centering trajectory approach, we directly apply the method of snapshots to the snapshot data. The resulting POD basis has a dimension of $100$.

\begin{remark}[Boundary Conditions]
    The exact solution satisfies the following boundary conditions:
    \begin{equation}
        \begin{cases}
            u = 0 \quad & \text{on } [0,1]\times\{0\} \text{ and } [0,1]\times\{1\}, \\
            v = 0 \quad & \text{on } \{0\}\times[0,1] \text{ and } \{1\}\times[0,1], \\
            \frac{\partial v}{\partial n} = 0 \quad & \text{on } [0,1]\times\{0\} \text{ and } [0,1]\times\{1\}, \\
            \frac{\partial u}{\partial n} = 0 \quad & \text{on } \{0\}\times[0,1] \text{ and } \{1\}\times[0,1].
    \end{cases}
    \label{eqn:remark:ROM_BCs}
    \end{equation}
Since the snapshots are obtained from the exact solution (by interpolating it on the finite element mesh), the snapshots inherit the homogeneous Dirichlet and Neumann boundary conditions~\eqref{eqn:remark:ROM_BCs} satisfied by the exact solution.
Thus, at the ROM level there is no need to enforce the boundary conditions since the POD basis and, as a result, the ROM solution will automatically satisfy the homogeneous Dirichlet and Neumann boundary conditions~\eqref{eqn:remark:ROM_BCs}.
    \label{remark:ROM_BCs}
\end{remark}

\subsection{AD Rates of Convergence}
\label{sec:AD_ROC}

\begin{figure}
\centering
\subcaptionbox{$r = 99$\label{fig:AD_scaling_r_99}}{\includegraphics[width=0.45\textwidth]{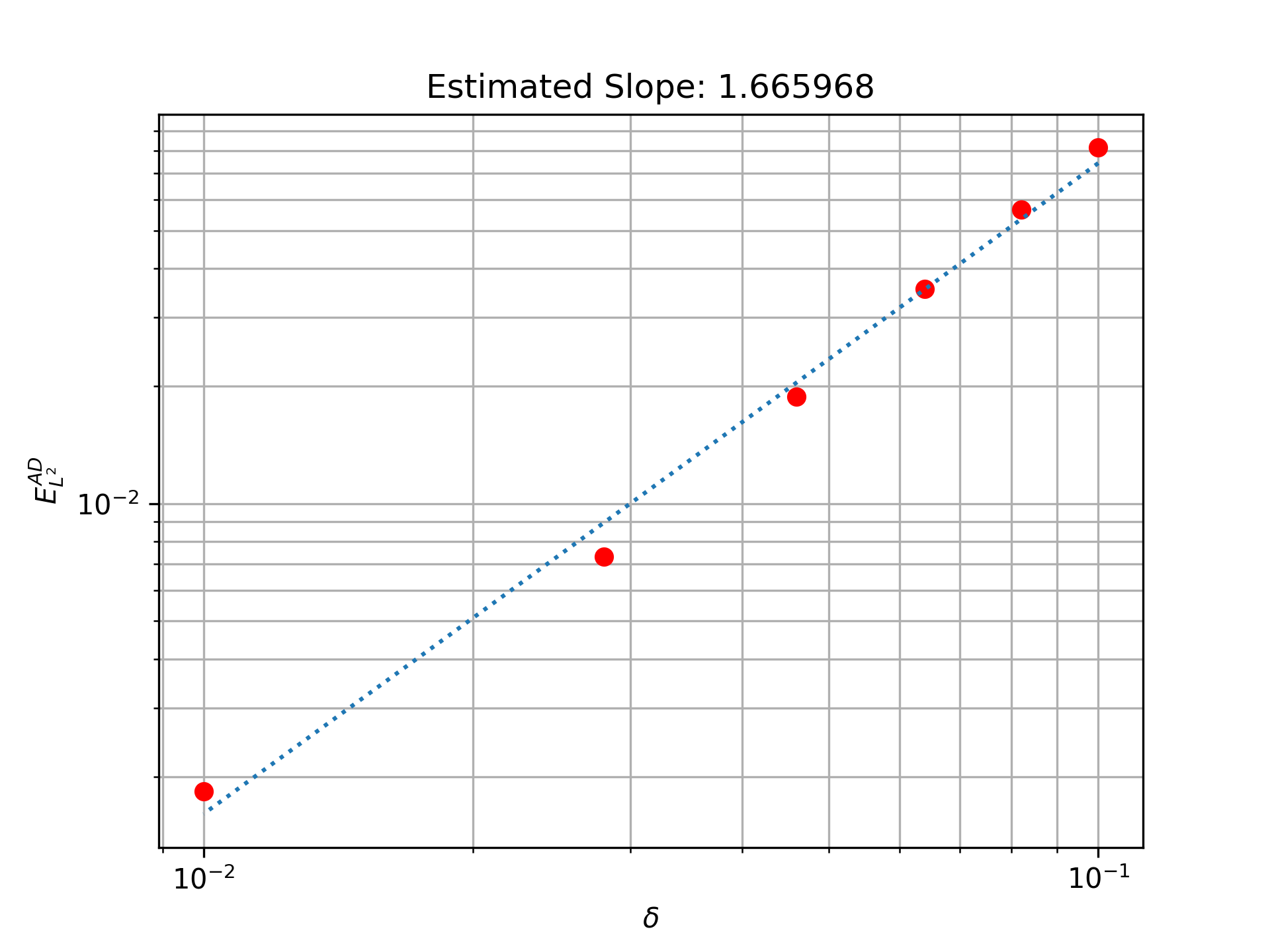}}%
\hfill
\subcaptionbox{$r=100$\label{fig:AD_scaling_r_100}}{\includegraphics[width=0.45\textwidth]{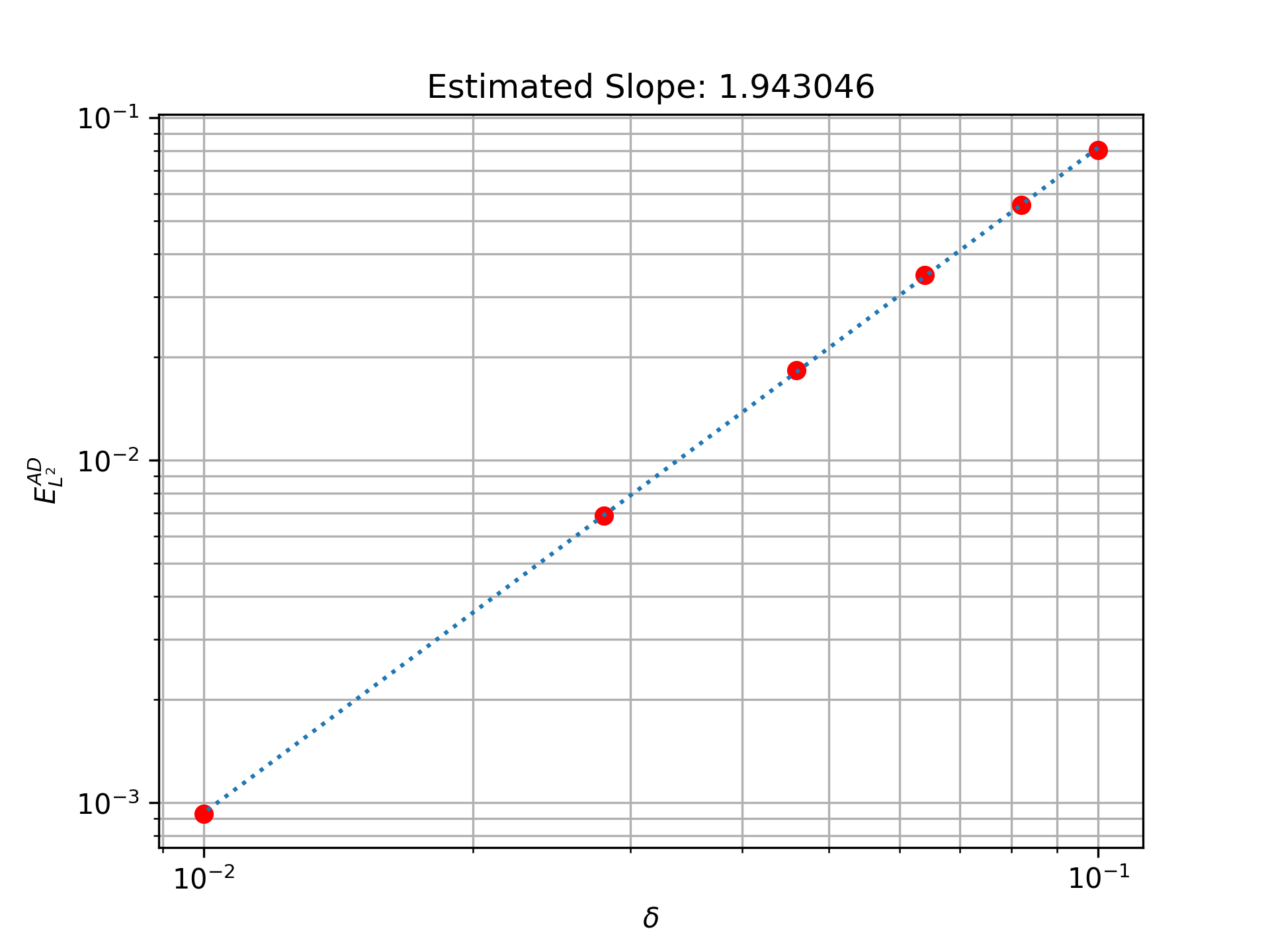}}%
\caption{Approximate deconvolution rates of convergence for (a) $r=99$ and (b) $r=100$.}
\label{fig:ad_scaling_results}
\end{figure}

In this section, we perform a numerical investigation of the van Cittert approximate deconvolution method itself, independently from the online running of the ROM.
Specifically, we investigate numerically how $\| \bu_k - D_N^r \obuk{k} \|$ scales with respect to filter 
radius, $\delta$.
To this end, we note that a theoretical scaling is provided by the {\emph{a priori}} error bound in Lemma \ref{lemma:Layton_213_Rom}, which, for completeness, we include below:
\begin{align}
     \| \bu_k - D_N^r \obuk{k} \|^2 &\leq C(N) \left(h^{2m+4} + \Delta t^4 + \sum_{j=r+1}^R \lambda_j\right) \nonumber \\
    &+ C(N) \delta^2 \left( h^{2m+2} + \| S^{r} \|_2 \ h^{2m+4} +  (1 + \|S^{r} \|_2) \Delta t^4 + \sum_{j=r+1}^R \| \nabla\bvarphi_j \|^2 \lambda_j \right) \nonumber \\
  &+ C(N) \delta^4 \| \Delta \bu \|^2.
  \label{eq:5.1_lemma}
\end{align}
To investigate how $\| \bu_k - D_N^r \obuk{k} \|$ scales with respect to the filter parameter $\delta$, we make the other sources of error (i.e., those associated with the finite element and ROM truncation errors) small compared to those associated with $\delta.$ 
Our primary sources of error in Lemma \ref{lemma:Layton_213_Rom} depend on $h$, $\Delta t$, $\delta$, and the ROM truncation errors $\Lambda_{L^{2}}^{r}=\sum_{j=r+1}^R \lambda_j$ and $\Lambda_{H^{1}}^{r}=\sum_{j = r+1}^R \| \nabla{\bvarphi_j} \|^2 \lambda_j$. Because our snapshots 
are projections of the analytical solution 
on the finite element space, the approximation error is independent of the time step, $\Delta t$. Fixing $h = 1/64$, $m = 2$ (
since we use the Taylor-Hood elements), and $r = 99$, we have that $h^{2m} = \mathcal{O}(10^{-8})$, $\Lambda_{L^{2}}^{r} = \mathcal{O}(10^{-5})$, $\Lambda_{H^{1}}^{r} = \mathcal{O}(10^{0})$, 
and $\| S_r \|_2 = \mathcal{O}(10^5)$. 

The {\emph{a priori}} error bound~\eqref{eq:5.1_lemma} suggests the scaling $\| \bu_k - D_N^r \obuk{k} \| = \mathcal{O}(\delta^2)$ when all other sources of error are small.
To investigate numerically this theoretical scaling, we use the following error metric, with $\bu_K^h$ referring to the FE projection of the known solution at the final snapshot used for POD: 
\begin{equation}
    E_{L^2}^{AD} = \| \bu_K^h - D_N^r \oburk[h]{K} \|.
\end{equation}
In Fig.~\ref{fig:ad_scaling_results}, we plot the error $E_{L^2}^{AD}$ for $r=99$ and $r=100$, the filter radius $\delta$ taken to be $6$ evenly spaced 
values between $10^{-1}$ and $10^{-2}$, and the van Cittert parameter $N = 5$. 

The theoretical scaling $E_{L^2}^{AD} = \mathcal{O}(\delta^2)$ is recovered numerically for $r=100$ (Fig.~\ref{fig:AD_scaling_r_100}). 
We note, however, that the numerical scaling for $r=99$ in Fig.~\ref{fig:AD_scaling_r_99} is somewhat lower than the expected theoretical scaling.
This lower scaling is due to the relatively large size of the term $\Lambda_{H^{1}}^{r}$ on the RHS of~\eqref{eq:5.1_lemma}.

\subsection{ADL-ROM Rates of Convergence}
\label{sec:ADL-ROM_ROC}

\begin{figure}
\centering
\begin{minipage}[b]{.37\textwidth}
  \centering
  {\renewcommand\arraystretch{1.5}
    \begin{tabular}{|c|c|}
        \hline
        $\delta$ & $E_{L^{2}}^{ADL-ROM}$ \\
        \hline
        $6.50\times10^{-1}$ & $2.66\times10^{-1}$ \\
        $5.00\times10^{-1}$ & $8.72\times10^{-2}$ \\
        $2.50\times10^{-1}$ & $4.74\times10^{-2}$ \\
        $1.80\times10^{-1}$ & $2.44\times10^{-2}$ \\
        $1.25\times10^{-1}$ & $8.05\times10^{-3}$ \\
        $6.25\times10^{-2}$ & $2.06\times10^{-3}$ \\
        \hline
    \end{tabular}}\\[1.5em]
    \captionof{table}{ADL-ROM approximation error $E_{L^{2}}^{ADL-ROM}$ for decreasing $\delta$ values.}
    \label{tbl:Delta_Scaling_ROM}
\end{minipage}
\hspace{1cm}
\begin{minipage}[b]{.45\textwidth}
    \centering
    \includegraphics[width=\textwidth]{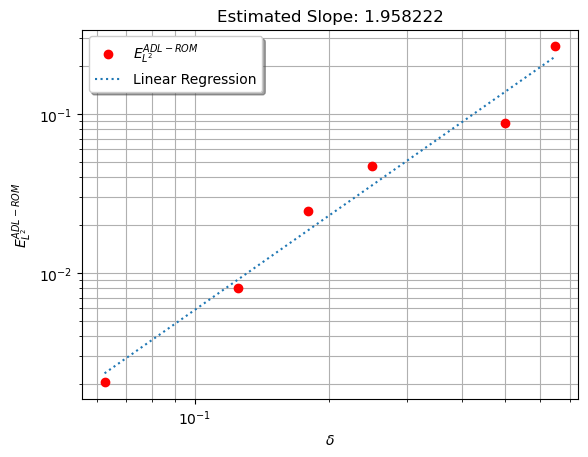}
    \caption{Linear regression of $E_{L^{2}}^{ADL-ROM}$ with respect to $\delta$.\\}
    \label{fig:Delta_Scaling_ROM}
\end{minipage}\hfill
\end{figure}

\begin{figure}
\centering
\begin{minipage}[b]{.40\textwidth}
  \centering
  {\renewcommand\arraystretch{1.5}
    \begin{tabular}{|c|c|c|}
        \hline
        $r$ & $\Lambda^{r}_{H^1}$ & $E_{L^{2}}^{ADL-ROM}$ \\
        \hline
        10 & $2.14\times10^{4}$ & $9.55\times10^{-2}$ \\
        20 & $1.71\times10^{4}$ & $4.98\times10^{-2}$ \\
        30 & $1.34\times10^{4}$ & $3.37\times10^{-2}$ \\
        40 & $1.02\times10^{4}$ & $2.34\times10^{-2}$ \\
        50 & $7.36\times10^{3}$ & $1.83\times10^{-2}$ \\
        \hline
    \end{tabular}}\\[2.5em]
    \captionof{table}{ADL-ROM approximation error $E_{L^{2}}^{ADL-ROM}$ for increasing $r$ values.}
    \label{tbl:r_Scaling_ROM}
\end{minipage}
\hspace{1cm}
\begin{minipage}[b]{.45\textwidth}
    \centering
    \includegraphics[width=\textwidth]{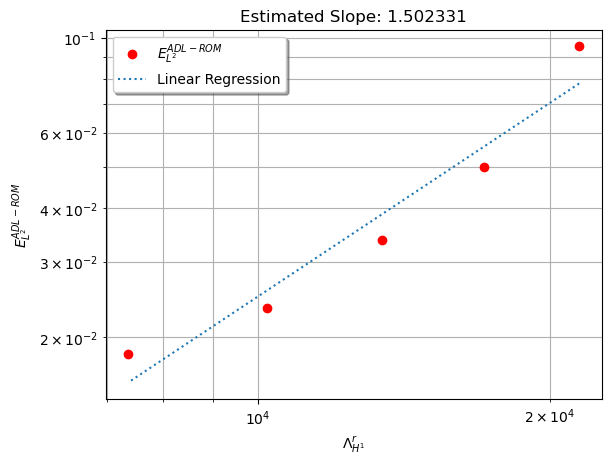}
    \caption{Linear regression of $E_{L^{2}}^{ADL-ROM}$ with respect to $\Lambda^{r}_{H^1}$.}
    \label{fig:r_Scaling_ROM}
\end{minipage}\hfill
\end{figure}

In this section, we perform a numerical investigation of the ADL-ROM 
{\emph{a priori}} error bound \eqref{eqn:theorem-bound}--(\ref{eqn:f_32}) in Theorem \ref{thm:32}. The ADL-ROM approximation error at the final time step is $E_{L^{2}}^{ADL-ROM}=\left\|\bu^{h}_{K} - \bu_{K}^r \right\|$, where $\bu_{K}^r \ $
 is the ADL-ROM solution at the final time step. We numerically investigate the rates of convergence of $E_{L^{2}}^{ADL-ROM}$ with respect to the filter radius $\delta$ and the ROM truncation error $\Lambda_{H^{1}}^{r}=\sum_{j=r+1}^{R}\|\nabla\bvarphi_j\|^2\lambda_j$.

To determine the ADL-ROM approximation error rate of convergence with respect to $\delta$, we fix $h=\frac{1}{64}$, $r=99$, $\Delta t=10^{-3}$, $N=5$, and vary $\delta$. With these choices, $h^{2 m}=\mathcal{O}\left(10^{-8}\right)$, $\Delta t^{4}=\mathcal{O}\left(10^{-12}\right)$, $\Lambda_{L^{2}}^{r}=\mathcal{O}\left(10^{-5}\right)$, $\Lambda_{H^{1}}^{r}=\mathcal{O}\left(10^{0}\right)$, and $\left\|S^{r}\right\|_{2}=\mathcal{O}\left(10^{5}\right)$.  Thus, the theoretical ADL-ROM 
{\emph{a priori}} error bound predicts the following rate of convergence of $E_{L^{2}}^{ADL-ROM}$ with respect to $\delta$:

\begin{equation}
E_{L^{2}}^{ADL-ROM}=\mathcal{O}\left(\delta^{2}\right).
\end{equation}

The ADL-ROM approximation error $E_{L^{2}}^{ADL-ROM}$ is listed in Table $\ref{tbl:Delta_Scaling_ROM}$ for decreasing $\delta$ values. The corresponding linear regression, which is shown in Fig.~\ref{fig:Delta_Scaling_ROM}, indicates the following ADL-ROM approximation error rate of convergence with respect to $\delta$:

\begin{equation}
E_{L^{2}}^{ADL-ROM}=\mathcal{O}\left(\delta^{1.96}\right).
\end{equation}
Thus, the theoretical rate of convergence is numerically recovered.

To determine the ADL-ROM approximation error rate of convergence with respect to $\Lambda_{H^{1}}^{r}$, we fix $h=\frac{1}{64}$, $\Delta t=10^{-3}$, $\delta=6.25\times10^{-2}$, $N=5$, and vary $r$. With these choices, $h^{2 m}=\mathcal{O}\left(10^{-8}\right)$ and $\left\|S^{r}\right\|_{2}=\mathcal{O}\left(10^{2}\right)-\mathcal{O}\left(10^{5}\right)$. Thus, the theoretical ADL-ROM approximation error bound predicts the following rate of convergence of $E_{L^{2}}^{ADL-ROM}$ with respect to $\Lambda_{H^{1}}^{r}$:

\begin{equation}
E_{L^{2}}^{ADL-ROM}=\mathcal{O}\left(\Lambda_{H^{1}}^{r}\right).
\label{eqn:theoretical-roc-lambda-h1}
\end{equation}

The ADL-ROM approximation error $E_{L^{2}}^{ADL-ROM}$ is listed in Table \ref{tbl:r_Scaling_ROM} for increasing $r$ values. The corresponding linear regression, which is shown in Fig.~\ref{fig:r_Scaling_ROM}, indicates the following ADL-ROM approximation error rate of convergence with respect to $\Lambda_{H^{1}}^{r}$:

\begin{equation*}
E_{L^{2}}^{ADL-ROM}=\mathcal{O}\left(\left(\Lambda_{H^{1}}^{r}\right)^{1.50}\right).
\end{equation*}
Thus, the theoretical rate of convergence is numerically recovered.
We note that the rate of convergence displayed by the numerical results in Fig. \ref{fig:r_Scaling_ROM} (i.e., $1.50$) is higher than the theoretical rate of convergence in~\eqref{eqn:theoretical-roc-lambda-h1}.
We believe that this higher rate of convergence in Fig. \ref{fig:r_Scaling_ROM} is in part due to the outlier data point in the top right corner of the plot, which corresponds to the largest value of $\Lambda_{H^{1}}^{r}$.
This $r$ value yields the lowest ROM resolution results, which are less accurate than the results corresponding to the other $r$ values.

\section{Conclusions}
    \label{sec:conclusions}

The impetus for the approximate deconvolution method is similar to that of LES, where we are interested in finding accurate solutions for large structures in under-resolved, convection-dominated fluid flow simulations at a minimal computational cost. In a ROM context, we cannot expect to fully resolve the small flow scales, but these small scales usually have important implications for solution accuracy. 
This has previously been addressed with Leray type models to smooth the non-linearities just enough \citep{gunzburger2020leray,xie2018numerical}. Unfortunately, Leray models can be overly diffusive and smear the solution. The approximate deconvolution Leray method for ROMs,
recently introduced by the authors in~\citet{sanfilippo2023approximate}, offers a compromise candidate between the classical Galerkin projection and the potentially over-smoothing Leray filter method. 

This paper presented the first numerical analysis of the ROM approximate deconvolution operator (Section \ref{sec:rom_ad}). 
In particular, we proved an AD approximation bound in Lemma \ref{lemma:Layton_213_Rom}, and we illustrated numerically the corresponding rates of convergence in Section \ref{sec:AD_ROC}. 
We also performed the first numerical analysis of the new ADL-ROM (Section~\ref{sec:adl-rom-numerical-analysis}).
Specifically, we proved an {\emph{a priori}} error bound for the ADL-ROM in Theorem \ref{thm:32}, 
and numerically illustrated the theoretical results in Section \ref{sec:ADL-ROM_ROC}. 

The rates of convergence demonstrated in Section \ref{sec:numerical-results} confirmed our theoretical results, but more importantly provided an idea of what to expect when altering the filter radius $\delta$ in both general AD scenarios and in our application to the NSE. 
Indeed, when working on realistic problems, difficult decisions need to be made about accuracy and resolved scales vs.\ computational expense. 
Without a mathematical framework for how these filters behave, tuning can only be done on an ad hoc basis.
Thus, the resulting ROMs lack robustness and need to be recalibrated for every new computational setting.
To our knowledge, the important role of the filter radius on the ROM results has been investigated only numerically, see, e.g., the ROM-adaptive lengthscale in~\citet{mou2023energy} and the sensitivity studies on $\delta$ in~\citet{girfoglio2021pod,tsai2023time}.
The current study provides a numerical analysis strategy for investigating the effect of the filter radius on the ROM results, and represents a rigorous alternative to the aforementioned approaches.

This paper and the preceding paper \citep{sanfilippo2023approximate} have integrated classical AD methods with ROM regularization methods. A clear avenue for future research, then, is to integrate ROMs with more modern AD procedures. The field of image processing, which inspires many of these techniques, is changing rapidly with the advent of accessible machine learning, but these advances have not propagated into ROM research. There is considerable space to investigate the performance of new AD methods in a ROM context, both those reminiscent of classical methods \citep{Benning2018inverse} and those dependent on a machine learning architecture \citep{Ongie2020deeplearning}.
Furthermore, while ROM regularization can be a cheap and effective modeling technique, ROM closure methods \citep{xie2017approximate} provide an alternative framework to solving the problem of under-resolved scales and can also benefit from advances in ROM AD methods. 
Finally, providing numerical analysis support and, in particular, proving {\emph{a priori}} error bounds for the new ROM AD strategies is an important research avenue for creating reliable ROMs.

\section*{Acknowledgments}
We acknowledge the European Union's Horizon 2020 research and innovation program under the Marie Sk\l{}odowska-Curie Actions, grant agreement 872442 (ARIA).
FB also acknowledges the PRIN 2022 PNRR project ``ROMEU: Reduced Order Models for Environmental and Urban flows'' funded by the European Union -- NextGenerationEU under the National Recovery and Resilience Plan (NRRP), Mission 4 Component 2, CUP J53D23015960001. 
AS and FB further acknowledge INdAM GNCS.
TI and IM acknowledge support through National Science Foundation grants DMS-2012253 and CDS\&E-MSS-1953113.

We thank William Layton for clarifying the proof of inequality (2.22) in ~\citet{layton2008numerical}, which helped motivate proving two different bounds in Lemma~\ref{thm:ROM211_split1} (see Remark~\ref{remark:filter-stability-bounds}).

\makeatletter
\bibliographystyle{plain}

\makeatother
\bibliography{traian}

\end{document}